\newtheorem{thm}{Theorem}
\newtheorem{prop}[thm]{Proposition}
\newtheorem{lem}[thm]{Lemma}
\newtheorem{cor}[thm]{Corollary}
\theoremstyle{definition}
\theoremstyle{remark}
\newtheorem{rem}[thm]{Remark}
\numberwithin{equation}{section}
\newcommand{\findem}{\hspace*{\fill}$\Box$ \smallskip}
\newcommand{\Z}{\mathbb{Z}}
\newcommand{\C}{\mathbb{C}}
\newcommand{\Q}{\mathbb{Q}}
\newcommand{\F}{\mathbb{F}}
\newcommand{\CP}{\mathbb{P}}
\newcommand{\cpo}{\mathbb{P}^1}
\newcommand{\cpd}{\mathbb{P}^2}
\newcommand{\cpt}{\mathbb{P}^3}
\newcommand{\reste}{\mbox{\sffamily R}}
\newcommand{\aux}{{\rm Aux\,}}
\newcommand{\ind}{{\rm ind}}
\newcommand{\mygraph}[1]{\xybox{\xygraph{#1}}}
\newcommand{\size}{0.56cm}
\newcommand{\sizemodel}{0.38cm}
\newcommand{\gros}[1]{\mbox{\bf \LARGE \it #1}}
\newcommand{\dessinEzerosing}{
\mygraph{ !{<0cm,0cm>;<\size,0cm>:<0cm,\size>::}
!{(0,0)  }="A"
!{(3,1)  }="B"
!{(0,2)}*{\gros{S}}
"A"-|>(0.2){\bigstar}^{E_0}"B"} 
}
\newcommand{\dessinEn}{
\mygraph{ !{<0cm,0cm>;<\size,0cm>:<0cm,\size>::}
!{(0,1)  }="C"
!{(3,0)  }="D"
!{(3,2)}*{\gros{S'}}
"C"-^{E_n}"D"} 
}
\newcommand{\dessinresolutionbasic}{
\mygraph{
!{<0cm,0cm>;<\size,0cm>:<0cm,\size>::}
!{(0,-2)  }="A" !{(4,-1)  }="B"
!{(3,-1)  }="C" !{(7,-2)  }="D"
!{(6,-2)  }="E" !{(10,-1) }="F"
!{(4.5,-2)}="G" !{(5.5,1)}="H"
!{(5.5,0)}="I" !{(4.5,3)}="J"
"A"-^{C_0}_{-1}"B" "C"-^(0.6){C_3}_(0.6){-2}"D" "E"-^{C_4}_{-1}"F" 
"G"-^{C_2}_{-2}"H" "I"-^{C_1}_{-2}"J"
}}
\newcommand{\dessinresolutionbasicbis}{
\mygraph{
!{<0cm,0cm>;<\size,0cm>:<0cm,\size>::}
!{(0,0)  }="A" !{(4,1)  }="B"
!{(3,1)  }="C" !{(7,0)  }="D"
!{(6,0)  }="E" !{(10,1) }="F"
!{(4.5,0)}="G" !{(5.5,3)}="H"
!{(5.5,2)}="I" !{(4.5,5)}="J"
!{(2,4.5)}="K" !{(5.5,4.5)}="L"
!{(2,4)}="M" !{(5.5,4)}="N"
"A"-^{E_0}_{-1}"B" "C"-^(0.6){E_1}_(0.6){-2}"D" "E"-^{E_2}_{-1}"F" 
"G"-^{\aux_2}_{-2}"H" "I"-^{\aux_1}_{-2}"J"
"K"-@{.}^(0.2){D_{+}}"L" "M"-@{.}_(0.2){D_{-}}"N" 
}}
\newcommand{\dessinresolutiontranspose}{
\mygraph{
!{<0cm,0cm>;<\size,0cm>:<0cm,\size>::}
!{(0,0)  }="A" !{(4,1)  }="B"
!{(3,1)  }="C" !{(7,0)  }="D"
!{(6,0)  }="E" !{(10,1) }="F"
!{(1,0)  }="G" !{(2,3)  }="H"
!{(2,2)  }="I" !{(1,5)  }="J"
!{(9,0)}="K" !{(8,3)}="L"
"A"-_{\quad E_0 = L_{0,\infty}}^{-1}"B" "C"-^{E_1}_{-2}"D" "E"-^{E_2}_{-1}"F" 
"G"-^{C_1}_{-3}"H" "I"-^{C_2}_{-2}"J" "K"-^(0.7){\rm Aux}_(0.6){-3}"L" 
}}
\newcommand{\dessinresolutionKplusB}{
\mygraph{
!{<0cm,0cm>;<\size,0cm>:<0cm,\size>::}
!{(0,0)  }="A" !{(3,1)  }="B"
!{(2,1)  }="C" !{(5,0)  }="D"
!{(4,0)  }="E" !{(7,1) }="F"
!{(6,1)  }="G" !{(9,0)  }="H"
!{(8,0)  }="I" !{(11,1)  }="J"
!{(3,2)}*{\gros{W}}
"A"-^{E_0}|>(0.2){\bigstar}"B" "C"-^{E_1}"D" "E"-@{.}|\bigstar"F" "G"-@{.}|\bigstar"H" "I"-^{E_n}"J" 
}}
\newcommand{\dessinresolutioncomplete}{
\mygraph{
!{<0cm,0cm>;<\size,0cm>:<0cm,\size>::}
!{(0,0)  }="A" !{(3,1)  }="B" !{(2,1)  }="C" !{(5,0)  }="D"
!{(4,0)  }="E" !{(7,1) }="F" !{(6,1)  }="G" !{(9,0)  }="H"
!{(8,0)  }="I" !{(11,1)  }="J"
!{(1,0)  }="K" !{(0,2)  }="L" 
!{(0,1)  }="M" !{(1,3)  }="N" 
!{(6,0)  }="O" !{(5,2)  }="P" 
!{(5,1)  }="Q" !{(6,3)  }="R" 
!{(7,0)  }="S" !{(8,2)  }="T" 
!{(3,2.5)}*{\gros{X}}
"A"-^{E_0}"B" "C"-^{E_1}"D" "E"-@{.}"F" "G"-@{.}"H" "I"-^{E_n}"J"
"K"-"L" "M"-"N" "O"-"P" "Q"-"R" "S"-"T"
}}
\newcommand{\dessinpreuve}{
\mygraph{ !{<0cm,0cm>;<\size,0cm>:<0cm,\size>::}
!{(0,0)  }="A" !{(4,1)  }="B"
!{(3,1)  }="C" !{(7,0)  }="D"
!{(1,0)  }="E" !{(0,3)  }="F"
!{(6,0)  }="G" !{(7,3)  }="H"
!{(7,2)  }="I" !{(6,5)  }="J"
!{(3,3)}*{\gros{Y}}
"A"-_{E_0}"B" "C"-_{E_1}"D"
"E"-@{~}^{\rm Sing}"F" "G"-@{~}^{\rm Aux}"H" "I"-@{~}^{\rm Sing}"J" 
}}
\newcommand{\dessinTa}{
\mygraph{ !{<0cm,0cm>;<\size,0cm>:<0cm,\size>::}
!{(0,-2)  }="A" !{(4,-1)  }="B"
!{(3,-1)  }="C" !{(7,-2)  }="D"
!{(1,-2)  }="E" !{(0,1)  }="F"
!{(6,-2)  }="G" !{(7,1)  }="H"
!{(7,0)  }="I" !{(6,3)  }="J"
"A"-_{E_i}^{-1}"B" "C"-_{E_{i+1}}^{-1}"D"
"G"-@{~}_{\rm Sing}^>(.7){-k,-2,\cdots,-2}"H"  
}}
\newcommand{\dessinTb}{
\mygraph{ !{<0cm,0cm>;<\size,0cm>:<0cm,\size>::}
!{(0,-2)  }="A" !{(4,-1)  }="B"
!{(3,-1)  }="C" !{(7,-2)  }="D"
!{(1,-2)  }="E" !{(0,1)  }="F"
!{(6,-2)  }="G" !{(7,1)  }="H"
!{(7,0)  }="I" !{(6,3)  }="J"
"A"-_{E_i}^{-1}"B" "C"-_{E_{i+1}}^{-1}"D"
"E"-@{~}^{\rm Sing}_>(.7){-k,-2,\cdots,-2}"F"  
}}
\newcommand{\dessinFdeux}{
\mygraph{ !{<0cm,0cm>;<\size,0cm>:<0cm,\size>::}
!{(0,0)  }="A" !{(4,0)  }="B"
!{(3.5,-0.5)  }="C" !{(3.5,3.5)  }="D"
!{(0,3)  }="E" !{(4,3)  }="F"
!{(1,1.5)}*{\gros{$\F_2$}}
"A"-_{E_1}"B" "C"-_{{\rm Aux\,}_2}"D" "E"-_{{\rm Aux\,}_1}"F" 
}}
\newcommand{\dessinVbar}{
\mygraph{ !{<0cm,0cm>;<\size,0cm>:<0cm,\size>::}
!{(0,0)  }="A" !{(4,0)  }="B"
!{(3.5,-0.5)  }="C" !{(3.5,3.5)  }="D"
!{(0,3)  }="E" !{(4,3)  }="F"
!{(1,2)  }="G" !{(0,5)  }="H"
!{(2,2)  }="I" !{(1,5)  }="J"
!{(1,1)}*{\gros{$\bar{V}$}}
"A"-_{E_1}"B" "C"-_{{\rm Aux\,}_2}"D" "E"-_(0.7){{\rm Aux\,}_1}"F"
"G"-@{.}^(0.8){D_{+}}"H" "I"-@{.}_(0.75){D_{-}}"J"
}}
\newcommand{\dessinSun}{
\mygraph{ !{<0cm,0cm>;<\size,0cm>:<0cm,\size>::}
!{(0,0)  }="A" !{(4,0)  }="B"
!{(3.5,-1)  }="C" !{(2,2)  }="D"
!{(2.5,-1)  }="E" !{(4,2)  }="F"
!{(0.5,1)}*{\gros{$S_1$}}
"A"-|>(0.74)\bigstar_{E_1}"B" "C"-@{.}^(0.8){D_{-}}"D" "E"-@{.}_(0.8){D_{+}}"F" 
}}
\newcommand{\dessininversionSzero}{
\mygraph{ !{<0cm,0cm>;<\size,0cm>:<0cm,\size>::}
!{(5.5,1)  }="C" !{(5.5,4.5)  }="D"
!{(1,4)  }="E" !{(6,4)  }="F"
!{(1.5,4.5)}="G" !{(.5,1)}="H"
!{(5,3.5)  }="G1" !{(5,6)}="H1"
!{(3.5,3.5)  }="G2" !{(3.5,6)}="H2"
!{(2,3.5)  }="G3" !{(2,6)}="H3"
!{(0,3)}="I2" !{(2,1)  }="J2"
!{(4,1)}*{\gros{$\bar{V_0}$}}
"C"-^(0.4){+1}_(0.4){L_{0,\infty}}"D" "E"-^(0.62){-3}_(0.62){L_{0,0}}"F" 
"G"-^(0.4){-2}_(0.3){D_{0,0}}"H" 
"G1"-@{.}_(0.9){D_{0,3}}"H1" "G2"-@{.}_(0.9){D_{0,2}}"H2" "G3"-@{.}_(0.9){D_{0,1}}"H3" "I2"-@{.}^(0.8){D_{0,4}}"J2"
}}
\newcommand{\dessininversionSdeux}{
\mygraph{ !{<0cm,0cm>;<\size,0cm>:<0cm,\size>::}
!{(5.5,1)  }="C" !{(5.5,4.5)  }="D"
!{(1,4)  }="E" !{(6,4)  }="F"
!{(1.5,4.7)  }="G" !{(.5,1)  }="H"
!{(3.5,3.5)  }="G2" !{(3.5,6)}="H2"
!{(0,4)}="I1" !{(3,3)  }="J1"
!{(0,3)}="I2" !{(2.5,1.5)  }="J2"
!{(0,2)}="I3" !{(2,0)  }="J3"
!{(4,1)}*{\gros{$\bar{V_2}$}}
"C"-^(0.4){+1}_(0.4){L_{2,\infty}}"D" "E"-^(0.7){-1}_(0.7){L_{2,0}}"F" "G"-^(0){-4}_(0.05){D_{2,0}}"H" 
"G2"-@{.}_(0.9){D_{2,4}}"H2"  
"I1"-@{.}_(0.9){D_{2,1}}"J1" "I2"-@{.}_(0.9){D_{2,2}}"J2" "I3"-@{.}_(0.9){D_{2,3}}"J3"
}}
\newcommand{\dessinprojzero}{
\mygraph{ !{<0cm,0cm>;<\size,0cm>:<0cm,\size>::}
!{(3,0)}="A" !{(3,3.5)}="B"
!{(0,3)}="C" !{(3.5,3)}="D"
!{(0.5,4.5)}*{\gros{$\cpd$}}
"A"-^(0.3){+1}_(0.3){L_{0,\infty}}"B" "C"-^{+1}_{L_{0,0}}"D"
}}
\newcommand{\dessinprojdeux}{
\mygraph{ !{<0cm,0cm>;<\size,0cm>:<0cm,\size>::}
!{(3,0)}="A" !{(3,3.5)}="B"
!{(0,3)}="C" !{(3.5,3)}="D"
!{(0.5,4.5)}*{\gros{$\cpd$}}
"A"-^(0.3){+1}_(0.3){L_{2,\infty}}"B" "C"-^{+1}_{L_{2,0}}"D"
}}
\newcommand{\dessinSzero}{
\mygraph{ !{<0cm,0cm>;<\size,0cm>:<0cm,\size>::}
!{(3,0)}="A" !{(3,3.5)}="B"
!{(2,0)}="C" !{(3.5,3)}="D"
!{(1,1)}="E" !{(4,2.5)}="F"
!{(1,2)}="G" !{(4,2)}="H"
!{(1,4)}="I" !{(4,1)}="J"
!{(4,4.5)}*{\gros{$S_0$}}
"A"-|(0.56)\bigstar_(0.1){L_{0,\infty}}"B" 
"C"-@{.}^(0.1){D_{0,1}}"D" "E"-@{.}^(0.1){D_{0,2}}"F" 
"G"-@{.}^(0.1){D_{0,3}}"H" "I"-@{.}^(0.1){D_{0,4}}"J"
}}
\newcommand{\dessinSdeux}{
\mygraph{ !{<0cm,0cm>;<\size,0cm>:<0cm,\size>::}
!{(3,0)}="A" !{(3,3.5)}="B"
!{(2,0)}="C" !{(3.5,3)}="D"
!{(1,1)}="E" !{(4,2.5)}="F"
!{(1,2)}="G" !{(4,2)}="H"
!{(1,4)}="I" !{(4,1)}="J"
!{(4,4.5)}*{\gros{$S_2$}}
"A"-|(0.56)\bigstar_(0.1){L_{2,\infty}}"B" 
"C"-@{.}^(0.1){D_{2,1}}"D" "E"-@{.}^(0.1){D_{2,2}}"F" 
"G"-@{.}^(0.1){D_{2,3}}"H" "I"-@{.}^(0.1){D_{2,4}}"J"
}}
\newcommand{\dessinresolutiontransposebis}{
\mygraph{
!{<0cm,0cm>;<\size,0cm>:<0cm,\size>::}
!{(0,0)  }="A0" !{(4,1)  }="B0"
!{(3,1)  }="A1" !{(7,0)  }="B1"
!{(6,0)  }="A2" !{(10,1) }="B2"
!{(9,1)  }="A3" !{(13,0)  }="B3"
!{(12,0)  }="A4" !{(16,1)  }="B4"
!{(15,1)  }="A5" !{(19,0) }="B5"
!{(18,0)  }="A6" !{(22,1) }="B6"
!{(1,0)  }="G1" !{(2,3)  }="G2" !{(2,2)  }="G3" !{(1,5)  }="G4"
!{(10.5,0)}="H1" !{(11.5,3)}="H2" 
!{(11.5,2)}="H3" !{(10.5,5)}="H4"  !{(10.5,4)}="H5" !{(11.5,7)}="H6" 
!{(21,0)  }="I1" !{(20,3)  }="I2" !{(20,2)  }="I3" !{(21,5)  }="I4"
"A0"-^{E_0}_{-1}"B0" "A1"-^{E_1}_{-2}"B1" "A2"-^{E_2}_{-4}"B2" 
"A3"-^(0.6){E_3}_(0.6){-2}"B3" "A4"-^{E_4}_{-4}"B4" "A5"-^{E_5}_{-2}"B5" 
"A6"-^{E_6}_{-1}"B6" 
"G1"-^{-3}"G2" "G3"-_{-2}"G4" 
"H1"-^{-2}"H2" "H3"-_{-2}"H4" "H5"-_{-4}"H6" 
"I1"-_{-3}"I2" "I3"-_{-2}"I4" 
}}
\newcommand{\dessinModeleI}{
\mygraph{
!{<0cm,0cm>;<\size,0cm>:<0cm,\sizemodel>::}
!{(2,-2) }="A1" !{(1,1)  }="A2" 
!{(1,0)  }="B1" !{(2,3)  }="B2" 
!{(2,2)  }="C1" !{(1,5)  }="C2"
!{(1,4)  }="D1" !{(2,7)  }="D2"
!{(1,6.5)  }="E1" !{(6,6.5)  }="E2"
!{(5,7)  }="F1" !{(5,3)  }="F2"
!{(1,5.5)  }="S1" !{(4,4)  }="S2"
!{(4.5,2)}*{\mbox{Model I}}
"A1"-@{.}^{E_3}_{-1}"A2" "B1"-^{E_2}_{-2}"B2" "C1"-^{E_1}_{-2}"C2" "D1"-^>(.6)l_>(.6){-2}"D2" 
"E1"-^>(.4){C}^>(.6){-1}"E2" "F1"-^{0}^>(.8)B"F2"
"S1"-@{.}^>(0.8){E_4}_>(0.8){-1}"S2"
}}
\newcommand{\dessinModeleII}{
\mygraph{
!{<0cm,0cm>;<\size,0cm>:<0cm,\sizemodel>::}
!{(2,-2) }="A1" !{(1,1)  }="A2" 
!{(1,0)  }="B1" !{(2,3)  }="B2" 
!{(2,2)  }="C1" !{(1,5)  }="C2"
!{(1,4)  }="D1" !{(2,7)  }="D2"
!{(1,6.5)  }="E1" !{(6,6.5)  }="E2"
!{(5,7)  }="F1" !{(5,3)  }="F2"
!{(1,1.5)  }="S1" !{(4,1)  }="S2"
!{(5.5,2)}*{\mbox{Model II}}
"A1"-@{.}^{E_1}_{-3}"A2" "B1"-^>(.55){E_3}_>(.3){-2}"B2" "C1"-^{E_2}_{-2}"C2" "D1"-^>(.6)l_>(.6){-2}"D2" 
"E1"-^>(.4){C}^>(.6){-1}"E2" "F1"-^{0}^>(.8)B"F2"
"S1"-@{.}^>(0.8){E_4}_>(0.8){-1}"S2"
}}
\newcommand{\dessinModeleIII}{
\mygraph{
!{<0cm,0cm>;<\size,0cm>:<0cm,\sizemodel>::}
!{(2,-2) }="A1" !{(1,1)  }="A2" 
!{(1,0)  }="B1" !{(2,3)  }="B2" 
!{(2,2)  }="C1" !{(1,5)  }="C2"
!{(1,4)  }="D1" !{(2,7)  }="D2"
!{(1,6.5)  }="E1" !{(6,6.5)  }="E2"
!{(5,7)  }="F1" !{(5,3)  }="F2"
!{(1,3)  }="S1" !{(4,4)  }="S2"
!{(4.5,2)}*{\mbox{Model III}}
"A1"-@{.}^{E_4}_{-1}"A2" "B1"-^{E_3}_{-3}"B2" "C1"-^>(.6){E_2}_>(.6){-2}"C2" "D1"-^l_{-2}"D2" 
"E1"-^>(.4){C}^>(.6){-1}"E2" "F1"-^{0}^>(.8)B"F2"
"S1"-@{.}^>(0.8){E_1}_>(0.8){-2}"S2"
}}
\newcommand{\dessinFun}{
\mygraph{
!{<0cm,0cm>;<0.4cm,0cm>:<0cm,0.4cm>::}
!{(2,4)  }="D1" !{(2,1)  }="D2"
!{(1,3.5)}="E1" !{(6,3.5)  }="E2"
!{(5,4)  }="F1" !{(5,1)  }="F2"
!{(4,1)}*{\F_1}
"D1"-_l^{0}"D2" "E1"-^C_{-1}"E2" "F1"-_0^{B}"F2"
}}
\newcommand{\dessinFdeuxbis}{
\mygraph{
!{<0cm,0cm>;<0.4cm,0cm>:<0cm,0.4cm>::}
!{(1,3.5)}="E1" !{(6,3.5)  }="E2"
!{(2,4)  }="F1" !{(2,1)  }="F2"
!{(4,1)}*{\F_2}
"E1"-@{.}_{-2}^{E_1}"E2" "F1"-@{.}^{0}_{E_4}"F2"
}}
\newcommand{\dessinFzero}{
\mygraph{
!{<0cm,0cm>;<0.4cm,0cm>:<0cm,0.4cm>::}
!{(1,3.5)}="E1" !{(6,3.5)  }="E2"
!{(2,4)  }="F1" !{(2,1)  }="F2"
!{(4,1)}*{\F_0}
"E1"-@{.}_{0}^{E_1}"E2" "F1"-@{.}^{0}_{E_4}"F2"
}}
\newcommand{\dessinFzerobis}{
\mygraph{
!{<0cm,0cm>;<0.4cm,0cm>:<0cm,0.4cm>::}
!{(1,3.5)}="E1" !{(6,3.5)  }="E2"
!{(2,4)  }="F1" !{(2,1)  }="F2"
!{(4.5,1)}*{\F_0 \mbox{ or } \F_2}
"F1"-@{.}^{0}_{E_3}"F2" "E1"-@{.}^>(.6){0 \mbox{ or }-2}"E2"
}}
\newcommand{\ModeleI}{
\mygraph{
!{<0cm,0cm>;<\size,0cm>:<0cm,\size>::}
!{(0,0)  }*+{\dessinFzerobis}="F0"
!{(0,6)  }*{\dessinModeleI}="MI"
"MI"-@{->}"F0" 
}}
\newcommand{\ModeleII}{
\mygraph{
!{<0cm,0cm>;<\size,0cm>:<0cm,\size>::}
!{(0,0)  }*+{\dessinFdeuxbis}="F2"
!{(0,6)  }*{\dessinModeleII}="MII"
"MII"-@{->}"F2" 
}}
\newcommand{\ModeleIII}{
\mygraph{
!{<0cm,0cm>;<\size,0cm>:<0cm,\size>::}
!{(0,0)  }*+{\dessinFzero}="F0"
!{(0,6)  }*{\dessinModeleIII}="MIII"
"MIII"-@{->}"F0" 
}}
\newcommand{\courb}{1.13cm}
\newcommand{\dessinDansFzero}{
\mygraph{
!{<0cm,0cm>;<\size,0cm>:<0cm,\size>::}
!{(0,0)}*{\bullet}="p" !{(0,-2)}="u"
!{(0,-3)}="Fu1" !{(0,3)}="Fu2"
!{(-4,0)}="deb" !{(+4,0)}="fin" 
!{(-1.5,-3)}="L1" !{(1.5,3)}="L2"
!{(-3,2)}="H1" !{(3,2)}="H2"
!{(-3,-2)}="B1" !{(3,-2)}="B2"
!{(2,-3)}="D1" !{(2,3)}="D2"
!{(-2,-3)}="G1" !{(-2,3)}="G2"
!{(-3,3)}*{\F_0}
"H1"-^>(.4){C_0}@{.}"H2" "B1"-@{.}_>(0.25){x}"B2"
"D1"-_>(.3){F}@{.}"D2" "G1"-@{.}^>(0.25){y}"G2" 
"L1"-|<(-.05){H \sim C_0+F}@{.}"L2"
"deb"-^<{y=x^2}@/_\courb/"p" "p"-^>(0.85){B \sim C_0+2F}@/^\courb/"fin"
"Fu1"-_>(.3){F_u}_{p}@{.}"Fu2"
}}
\newcommand{\dessinDansFdeux}{
\mygraph{
!{<0cm,0cm>;<\size,0cm>:<0cm,\size>::}
!{(0,-1.1)}*{\bullet}="p" !{(0,-2)}="u"
!{(-5,-4)}="deb" !{(3,4)}="fin" 
!{(-2,-2)}="ori" !{(0,-1)}="mil" 
!{(0,-3)}="Fu1" !{(0,3)}="Fu2"
!{(-1,-3)}="L1" !{(4,0)}="L2"
!{(-3,2)}="H1" !{(3,2)}="H2"
!{(-3,-2)}="B1" !{(3,-2)}="B2"
!{(2,-3)}="D1" !{(2,3)}="D2"
!{(-2,-3)}="G1" !{(-2,3)}="G2"
!{(-3,3)}*{\F_2}
"H1"-^>(.4){C_0}_>(.3){-2}@{.}"H2" "B1"-@{.}_>(0.25){x}"B2"
"D1"-_>(.4){F}@{.}"D2" "G1"-@{.}^>(0.25){y}"G2" 
"deb"-_>(.2){y=x^3}@/^.35cm/"ori" "ori"-@/_.2cm/"mil" 
"mil"-_>(0.85){B \sim C_0+3F}@/_.1cm/"fin"
"L1"-^>(0.95){H \sim C_0+2F}@/^.76cm/@{.}"L2"
"Fu1"-^>(.4){F_u}_>(.3){p}@{.}"Fu2"
}}
\newcommand{\dessinTrFzero}{
\mygraph{
!{<0cm,0cm>;<0.4cm,0cm>:<0cm,0.4cm>::}
!{(1,3.5)}="E1" !{(6,3.5)  }="E2"
!{(2,4)  }="F1" !{(2,1)  }="F2"
!{(4.5,1)}*{\F_0}
"F1"-^{0}_{E_3}"F2" "E1"-_0^{\Delta}"E2"
}}
\newcommand{\dessinTrFun}{
\mygraph{
!{<0cm,0cm>;<0.4cm,0cm>:<0cm,0.4cm>::}
!{(.5,3.5)}="H1" !{(6,3.5)  }="H2"
!{(1,4)  }="G1" !{(1,.5)  }="G2"
!{(.5,1)}="B1" !{(6,1)  }="B2"
!{(5.5,4)  }="D1" !{(5.5,.5)  }="D2"
!{(-1.5,1.85)  }="delD" !{(3.25,1.75)  }="delM" !{(6.1,4)  }="delF"
!{(-1,3)}*{\F_1}
"G1"-_>(.4){l}_>(.6){0}"G2" "H1"-^>(.6){-1}^>(.4){C}"H2"
"B1"-_>(.6){0}_>(.4){L}"B2" "D1"-^>(.4){B}^>(.6){0}"D2"
"delD"-@/_.32cm/"delM" "delM"-^<{\Delta}_>(.3){+3}@/_.05cm/"delF"
}}
\newcommand{\dessinTrFdeux}{
\mygraph{
!{<0cm,0cm>;<0.4cm,0cm>:<0cm,0.4cm>::}
!{(1,3.5)}="E1" !{(6,3.5)  }="E2"
!{(2,4)  }="F1" !{(2,1)  }="F2"
!{(4.5,1)}*{\F_2}
"F1"-^{0}_{E_3}"F2" "E1"-^{C_0= L}_{-2}"E2"
}}
\newcommand{\dessinTrProj}{
\mygraph{
!{<0cm,0cm>;<0.4cm,0cm>:<0cm,0.4cm>::}
!{(2.5,3.5)  }="G1" !{(1,.5)  }="G2"
!{(-.3,0.9)}="B1" !{(5,1.1)  }="B2"
!{(0,4.3)  }="D1" !{(3.8,.8)  }="D2"
!{(-1,1.2)  }="delD" !{(0,3.9)  }="delF"
!{(2,2.4)}*{\bullet}
!{(-1.5,3)}*{\mathbb{P}^2}
"G1"-_>(.6){l}"G2" 
"B1"-_>(.9){L}"B2" "D1"-^>(.1){B}^>(.6){q}"D2"
"delD"-@/_1.3cm/^>(.1){+4}^<{\Delta}"delF" 
}}
\newcommand{\dessinTrHaut}{
\mygraph{
!{<0cm,0cm>;<\size,0cm>:<0cm,\sizemodel>::}
!{(2,-2) }="A1" !{(1,1.5)  }="A2" 
!{(1,0.5)  }="B1" !{(2,4)  }="B2" 
!{(2,3)  }="C1" !{(1,6.5)  }="C2"
!{(1,5.5)  }="D1" !{(2,9)  }="D2"
!{(1,8.5)  }="E1" !{(4.5,8.5)  }="E2"
!{(5.5,6.5)  }="F1" !{(5.5,1)  }="F2"
!{(4.8,6.7)  }="B'H" !{(4,1.9)  }="B'B"
!{(1,7.5)  }="S1" !{(3.3,6)  }="S2"
!{(4,9)  }="ExH" !{(6,6)  }="ExB"
!{(1,2.4)  }="delD" !{(4.4,2.7)  }="delF" 
!{(1,-2)  }="BD" !{(6,2)  }="BF" 
"A1"-^{E_3}_>(.6){-1}"A2" "B1"-^>(.35){E_2}^>(.65){-2}"B2" "C1"-^{E_1}_{-2}"C2" "D1"-^>(.6)l_>(.6){-2}"D2" 
"B'H"-^>(.6){B'}_>(.6){-1}"B'B" 
"E1"-_{-1}^>(.6)C"E2" "F1"-^{-1}^>(.65)B"F2"
"S1"-^>(0.8){E_4}_>(0.95){-1}"S2" 
"ExH"-^>(.6){-2}@/_0.5cm/"ExB" 
"delD"-_>(.35){\Delta}^>(.45){-1}@/_0.3cm/"delF"
"BD"-@/^0.2cm/_>(.5){L}_>(.65){-2}"BF"
}}
\newcommand{\dessinTrDeb}{
\mygraph{
!{<0cm,0cm>;<\size,0cm>:<0cm,\sizemodel>::}
!{(2,-2) }="A1" !{(1,1)  }="A2" 
!{(1,0)  }="B1" !{(2,3)  }="B2" 
!{(2,2)  }="C1" !{(1,5)  }="C2"
!{(1,4)  }="D1" !{(2,7)  }="D2"
!{(1,6.5)  }="E1" !{(6,6.5)  }="E2"
!{(5,7)  }="F1" !{(5,1)  }="F2"
!{(1,5.5)  }="S1" !{(3.3,4)  }="S2"
!{(1,2)  }="delD" !{(3,2)  }="delM" !{(5.3,7)  }="delF" 
!{(1,-2)  }="BD" !{(6,2)  }="BF" 
!{(2,8)}*{\hat{S}\mbox{ (model I)}}
"A1"-^{E_3}_>(.6){-1}"A2" "B1"-^>(.35){E_2}^>(.65){-2}"B2" "C1"-^{E_1}_{-2}"C2" "D1"-^>(.6)l_>(.6){-2}"D2" 
"E1"-_{-1}^>(.6)C"E2" "F1"-^{0}^>(.8)B"F2"
"S1"-^>(0.8){E_4}_>(0.95){-1}"S2" 
"delD"-@/_.5cm/"delM" "delM"-_>(.35){\Delta}_>(.2){+1}@/^0.1cm/"delF"
"BD"-@/^0.2cm/_>(.5){L}_>(.65){-2}"BF"
}}
\newcommand{\dessinTrFin}{
\mygraph{
!{<0cm,0cm>;<\size,0cm>:<0cm,\sizemodel>::}
!{(2,-2) }="A1" !{(1,1)  }="A2" 
!{(1,0)  }="B1" !{(2,3)  }="B2" 
!{(2,2)  }="C1" !{(1,5)  }="C2"
!{(1,4)  }="D1" !{(2,7)  }="D2"
!{(1,6.5)  }="E1" !{(6,6.5)  }="E2"
!{(5,7)  }="F1" !{(5,1)  }="F2"
!{(1,5.5)  }="S1" !{(3.3,4)  }="S2"
!{(1,2)  }="delD" !{(5.5,2.5)  }="delF" 
!{(1,-2)  }="BD" !{(4,0)  }="BM1" 
!{(4.2,6.9)  }="BF" !{(5.5,6)  }="BM2" 
!{(4.5,8)}*{\hat{S}'\mbox{ (model I)}}
"A1"-^{E_3}_>(.6){-1}"A2" "B1"-^>(.35){E_2}^>(.65){-2}"B2" "C1"-^{E_1}_{-2}"C2" "D1"-^>(.6)l_>(.6){-2}"D2" 
"E1"-_>(.6){-1}^C"E2" "F1"-^>(.55){0}^>(.4){B'}"F2"
"S1"-^>(0.8){E_4}_>(0.95){-1}"S2" 
"delD"-@/_.4cm/_>(.45){\Delta}^>(.55){-1}"delF"
"BD"-@/^0.2cm/"BM1" "BM1"-@/_1.3cm/_>(.5){0}_>(.6){L}"BM2" 
"BM2"-@/_0.05cm/"BF"
}}
\begin{document}

\title{Variations on log Sarkisov program for surfaces}

\author{Adrien Dubouloz}
\address{Institut de Math\'ematiques de Bourgogne, Universit\'e de Bourgogne, 9 avenue Alain Savary - BP 47870, 21078 Dijon cedex, France}
\email{adrien.dubouloz@u-bourgogne.fr}

\author{St\'ephane Lamy}
\address{Institut Camille Jordan, Universit\'e Lyon 1, 43 bd du 11 nov. 1918, 69622 Villeurbanne cedex, France }
\email{lamy@math.univ-lyon1.fr}

\date{January 2009}

\begin{abstract}
 Let $(S,B_S)$ be the log pair associated with a compactification of a given smooth quasi-projective surface $V$. Under the assumption that the boundary $B_S$ is irreducible, we propose an algorithm, in the spirit of the (log) Sarkisov program, to factorize any automorphism of $V$ into a sequence of elementary links in the framework of the log Mori theory. The new noteworthy feature of our algorithm is that all the blow-ups and contractions involved in the process occur on the boundary. Moreover, when the compactification $S$ is smooth we obtain a description of the automorphism group of $V$ which is reminiscent of a presentation by generators and relations.\\
\end{abstract}
%

\maketitle

\section*{Introduction}

Let $V$ be a smooth quasi-projective surface. We plan to describe the automorphisms of $V$ when there exists a compactification $V \subset S$ where $S$ is a (possibly singular) projective surface with $S \setminus V$ equal to an irreducible curve. More precisely, we look for a decomposition in the framework of  log Mori theory for automorphisms of $V$ that do not extend as biregular automorphisms on $S$. In this introduction  we suppose the reader has some familiarity with the basics of Mori theory (\cite{Mat} is an agreeable introductory book).\\

Recall that a $\Q$-factorial normal variety with at most terminal singularities $X$ is called a Mori fiber space if there exists a morphism $g: X \to Y$ with the following properties: $g$ has connected fibers, $Y$ is a normal variety of a dimension strictly less than $X$, and all the curves contracted  by $g$ are numerically proportional and of negative intersection with the canonical divisor $K_X$. A Mori fiber space should be thought of as a ``simplest possible'' representative in its birational class.

The Sarkisov program, written out in dimension 3 by Corti in 1995 \cite{Cor}, is an algorithm to  decompose a birational map $f: Y \dashrightarrow Y'$ between Mori fiber spaces into so-called elementary links. The algorithm works in principle in arbitrary dimension: Hacon and McKernan have recently announced a proof as a consequence of the results in \cite{BCHM}. The general idea is that one decomposes $f$ with the help of a sequence of intermediate varieties between  $Y$ and $Y'$, and that we have control of the complexity of these varieties in the sense that,  modulo isomorphism in codimension 1, at most one divisorial contraction is sufficient to come back to a Mori fiber space. 

Here is a brief description of the algorithm, where we try to avoid to enter into too much technicality. We start by taking a resolution $Y\stackrel{\pi}{\leftarrow} X \stackrel{\pi'}{\rightarrow} Y'$ of the base points of  $f$, where $X$ is a smooth  projective variety, and we choose an ample divisor $H'$ on $Y'$. We note $H_Y \subset Y$ (or $H_X \subset X$, etc...) the strict transform of a general member of the linear system $|H'|$, and  $C_i \subset X$ the irreducible components of the exceptional locus of $\pi$. We write down the ramification formulas  
$$ K_X = \pi^*K_Y + \sum c_i C_i \qquad \mbox{ and } \qquad 
 H_X = \pi^*H_Y - \sum m_i C_i $$
and we define the maximal multiplicity $\lambda$ as the  maximum of the $\lambda_i = \frac{m_i}{c_i}$ (technically $1/\lambda$ is defined as a canonical threshold). On the other hand we define the degree $\mu$ of $f$ as the rational number $\frac{H_Y.C}{-K_Y.C}$ where $C$ is any curve contained in a fiber of the Mori fibration on $Y$. In the case $\lambda > \mu$, that we feel is the general case, the algorithm predicts the  existence of a maximal extraction  $Z \to Y$ (we take the terminology from \cite{Cor2}, in \cite{Mat} the same operation is called a maximal divisorial blow-up), which by definition is an extremal divisorial contraction whose exceptional divisor realizes the  maximal multiplicity $\lambda$. Then either  $Z$ is itself a Mori fiber space, or there exists another  extremal divisorial contraction on  $Z$ (possibly preceded  by a sequence of log flips, that are isomorphisms in  codimension 1) that brings us back to a Mori  fiber space. These operations done, one shows that we have simplified  $f$ in the sense that: either $\mu$ went down; or $\mu$ remained constant but $\lambda$ went down; or $\mu$ and $\lambda$ remained constant but the number of exceptional divisors in $X$ realizing the  multiplicity $\lambda$ went down. As we can see the algorithm is quite complex, not to mention the case  $\lambda \le \mu$ and the proof of the termination of the algorithm, which are also intricate and that we do not detail further.

In 1997 Bruno and Matsuki \cite{BM} published a logarithmic version of this algorithm: the log Sarkisov program. In this new situation there exist some  distinguished divisors $B_Y$ and $B_{Y'}$ on the varieties $Y$ and $Y'$: this arises naturally when  $Y$ and $Y'$ are  compactifications of a fixed quasi-projective variety $V$; by analogy with this case we say in general that $B_Y$ is the boundary divisor of $Y$. The idea is that the algorithm remains formally the same, where $K_Y+B_Y$ now plays the role of the canonical divisor $K_Y$. The degree $\mu$ in this context is defined as $\mu = \frac{H_Y.C}{-(K_Y+B_Y).C}$, where $C$ is in some fiber of the log Mori fibration on  $Y$. In addition to the ramification formulas  for $K_X$ and $H_X$ we now have a similar formula for the boundary:
$$B_X =    \pi^*B_Y - \sum b_i C_i$$
and the maximal multiplicity is defined as the maximum of the $\lambda_i = \frac{m_i}{c_i-b_i}$. Bruno and  Matsuki worked out a  log Sarkisov algorithm in two  cases:
\begin{enumerate}
\item In dimension  3, for boundary divisors whose all coefficients are strictly less than 1 (the precise technical condition is $(Y,B_Y)$ klt, for kawamata log terminal);
\item In dimension 2, for boundary divisors whose coefficients are less or equal to 1 (the technical condition is dlt, for divisorially log terminal). 
\end{enumerate}

The expressed hope is that a  refinement of such an algorithm could allow us to understand the structure of polynomial automorphisms of $\C^3$. We have in mind to  compactify $\C^3$ by the projective space $\cpt$, and to apply the algorithm to the birational map from $\cpt$ to $\cpt$ induced by an automorphism of $\C^3$. A technical problem is that the boundary in this situation is the plane at infinity, with  coefficient +1, and therefore we are  not in the  klt framework. Nevertheless in  dimension 2 this obstacle  disappears, and we might feel free to think that everything is done in the case of surfaces.
  
Now here is the example that initially gave us the motivation to write on the log Sarkisov program in dimension 2 in spite of the existence of the results by Bruno-Matsuki. Let us consider an affine quadric surface  $V$, for instance we can take $V = \{w^2 + uv =1\} \subset \C^3$. Such a surface is isomorphic to $\cpo\times \cpo$ minus a diagonal $D$. Let $f$ be the rational map 
$$f: (x,y) \in \C^2 \dashrightarrow \left( x+ \frac{1}{x+y},y-\frac{1}{x+y}\right) \in \C^2.$$
This map preserves the levels $x+y = cte$, extends as a birational map from $S = \cpo \times \cpo$ to $S' = \cpo \times \cpo$, and induces  an isomorphism on  $V =   \cpo \times \cpo \setminus D$ where $D$ is the diagonal $x+y = 0$. The unique (proper) base point is the  point $p = [1:0],[1:0]$, and the unique contracted curve is the diagonal $D$. We can resolve  $f$ by performing 4 blow-ups that give rise to divisors  $C_1, \cdots, C_4$ arranged as on figure \ref{fig:1} (all these claims are not difficult to check by straightforward calculations in local charts; the reader may also look in  \cite{Lamyquad}).

The divisor $C_0$ is (the strict transform of) the diagonal on $S$, and $C_4$ is the diagonal on $S'$. Let us choose $H'=D$ as an ample divisor on $S'$, then we compute the coefficients in the ramification formulas:
$$  K_X = \pi^*K_S + \sum c_i C_i, \,
    B_X = \pi^*B_S - \sum b_i C_i \mbox{ and } 
    H_X = \pi^*H_S - \sum m_i C_i, $$
in order to deduce the $\lambda_i = \frac{m_i}{c_i-b_i}$. The $c_i$ and $b_i$ are easy to compute; for the $m_i$ it is sufficient to check that in this particular example the strict transform $H_S$ of a general member of $|D|$ is a smooth curve. The results are tabulated in the figure \ref{fig:1}.

 \begin{figure}[ht]
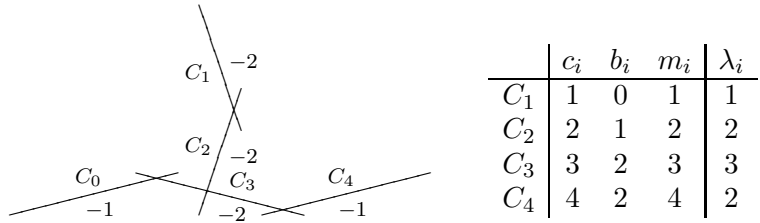

 $$\dessinresolutionbasic \qquad 
\begin{array}{c|ccc|c}
     & c_i & b_i & m_i & \lambda_i \\ \hline
 C_1 & 1   & 0   & 1   & 1 \\
 C_2 & 2   & 1   & 2   & 2 \\
 C_3 & 3   & 2   & 3   & 3 \\
 C_4 & 4   & 2   & 4   & 2 \\ 
 \end{array} $$
\caption{Resolution of $f$ and coefficients in the ramification formulas.} \label{fig:1}
\end{figure}

Thus the maximal multiplicity is realized by the divisor $C_3$. We can construct the maximal extraction of the  maximal singularity in the following way: blow-up three times to produce  $C_1,C_2$ and $C_3$, then contract $C_1$ and $C_2$ creating a singular point (this is a so-called Hirzebruch-Jung singularity, noted $A_{3,2}$). We obtain a surface $Z$ that compactifies the affine quadric $V$ by two curves: $C_0$ and $C_3$ (the latter supporting the unique singular point on the surface). After this maximal extraction is made we notice that there exists  4 curves on  $Z$ that correspond to  $K+B$ extremal rays:
 \begin{itemize}
  \item The strict transforms of the 2 rules $D_{+}$ and $D_{-}$ crossing at $p$: it is one of these two curves that the Bruno-Matsuki's algorithm imposes to contract  (precisely: the one that was a fiber for the chosen structure of Mori fiber space on $\cpo \times \cpo$);
  \item $C_3$, which is the exceptional divisor associated with the maximal multiplicity (that we have just constructed);
  \item $C_0$, which is the strict transform of the diagonal on $S$: it is this curve that our algorithm will impose to contract.
 \end{itemize}
This elementary example shows that the log Sarkisov algorithm   proposed by Bruno and Matsuki is not fully satisfying in the  sense that there is no reason why it should  respect the surface $V$ (the two authors were well aware of this fact, see \cite[problem 4.4]{BM}). It would be  natural to hope for an algorithm where all the blow-ups and contractions occur on the boundary divisor. This is such an algorithm, ``a variation on the log Sarkisov theme'', that we propose in this paper.
 
Our main result reads as follows, where the notion of ``admissible compactification'' will be  defined and discussed on paragraph \ref{section:cadre} below. Note that the surfaces $S_i$ are not supposed to be Mori fiber spaces.\\

\begin{thm}\label{th:main}
Let $f:V\stackrel{\sim}{\rightarrow}V'$ be an isomorphism of  smooth quasi-projective surfaces,  and let $S,S'$ be  admissible compactifications of $V$ (or equivalently of $V'$) such that the boundary divisors  $B_S, B_{S'}$ have irreducible support. Then if the induced birational map  $f: S \dashrightarrow S'$ is not an isomorphism, we can decompose $f$ into a finite sequence of $n$ links of the following form
 $$\xymatrix{
 &Z_i \ar[dl] \ar[dr] \\
 S_{i-1} && S_i
 }$$
where  $S_0 = S,S_1, \cdots,S_n = S'$ are admissible compactifications of  $V$ with an irreducible boundary,  $Z_i$ is for all $i = 1,\cdots,n$ an admissible compactification of $V$ with two boundary components,  and $Z_i \to S_{i-1}$, $Z_i \to S_i$ are the divisorial contractions associated with each one of the two $K+B$ extremal rays with support in the boundary of ${Z_i}$. 
\end{thm}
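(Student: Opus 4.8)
The plan is to run a Sarkisov-type induction, but with all operations forced onto the boundary, using an invariant that measures how far $f:S\dashrightarrow S'$ is from being an isomorphism. First I would set up the numerical data attached to $f$: fix an ample (or merely suitable) divisor $H'$ on $S'$, take a minimal resolution $S\xleftarrow{\pi}X\xrightarrow{\pi'}S'$ dominating the graph of $f$, and write the three ramification formulas for $K_X$, $B_X$, $H_X$ in terms of the $\pi$-exceptional curves $C_i$; this gives the multiplicities $\lambda_i=m_i/(c_i-b_i)$ and the maximal multiplicity $\lambda$, together with the log-degree $\mu=\tfrac{H_S\cdot C}{-(K_S+B_S)\cdot C}$. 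The key structural fact I would establish first — and this is where ``admissible'' and ``irreducible boundary'' do the work — is that the maximal singularity to be extracted always lies on the boundary $B_S$; concretely, the base points of $f$ on $S$ all sit on $B_S$ (because $f$ is an isomorphism on $V$, so no curve of $V$ is contracted, and the boundary being irreducible pins the geometry down enough that the unique relevant extraction is an iterated blow-up of points of the boundary). This should follow from the discussion of admissible compactifications in \S\ref{section:cadre}.

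Next I would construct the intermediate surface $Z_1$ as the maximal extraction of this boundary maximal singularity: blow up the appropriate tower of infinitely near points on $B_S$, then contract everything except the last exceptional curve and the strict transform of $B_S$. By construction $Z_1$ is $\Q$-factorial with (Hirzebruch–Jung) log terminal singularities and has exactly two boundary components $E_{\mathrm{new}}$ and the strict transform of $B_S$, so $Z_1$ is an admissible compactification of $V$ with two boundary components, as required. The point to verify here is that the cone $\overline{NE}(Z_1)$ has precisely two $K_Z+B_Z$-negative extremal rays whose loci meet the boundary (one is $E_{\mathrm{new}}$ contracting back down to $S_{i-1}=S$, the other is the strict transform of $B_S$ or a curve meeting it), and that contracting the second of these yields again an admissible compactification $S_1$ with irreducible boundary on which $f$ is ``simpler''. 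This is precisely the difference with Bruno–Matsuki: I deliberately contract the boundary-side ray rather than a fiber of some auxiliary Mori fibration, and the content is that this choice is still a legal log-Mori operation (the ray is $(K+B)$-negative and extremal) and still makes progress.

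For termination I would track the triple $(\mu,\lambda,\#\{\text{divisors realizing }\lambda\})$ in lexicographic order, exactly as in the classical Sarkisov program: after the link $S_{i-1}\leftarrow Z_i\rightarrow S_i$ one shows either $\mu$ strictly drops, or $\mu$ is unchanged and $\lambda$ strictly drops, or both are unchanged and the number of maximal divisors drops; since all three quantities are bounded below (and $\mu$ takes values in a discrete set once the compactifications are fixed up to the finitely many possibilities allowed by admissibility), the process stops. When it stops, $\lambda\le\mu$ and one checks directly that this forces $f:S_n\dashrightarrow S'$ to be an isomorphism, i.e. $S_n=S'$ after relabelling. The main obstacle I anticipate is not the bookkeeping but the two geometric lemmas underpinning it: (i) that the maximal singularity genuinely always lies on the irreducible boundary — this needs a careful local analysis of how an isomorphism of the open surfaces can fail to extend, and is the place where irreducibility of $B_S$ is essential; and (ii) that the resulting $Z_i$ really has exactly two boundary $K+B$-extremal rays, no more, so that the link is canonically determined and the diagram in the statement makes sense. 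Controlling these, and checking that the degree $\mu$ cannot increase under the forced boundary contraction, is the heart of the argument; the rest is the standard Sarkisov descent adapted verbatim to the logarithmic surface setting.
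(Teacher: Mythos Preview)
Your construction of the intermediate surface $Z_1$ is essentially right, and indeed the paper confirms (Proposition~\ref{prop:max}) that the morphism $Z_1\to S$ you describe \emph{is} the maximal extraction. The gap is in termination.

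You propose to track the Sarkisov triple $(\mu,\lambda,\#)$ lexicographically. But the log-degree $\mu = \dfrac{H_S\cdot C}{-(K_S+B_S)\cdot C}$ is only defined when $S$ carries a log Mori fiber space structure: $C$ must be a curve in a fiber, and all such curves must be numerically proportional. In this theorem the intermediate $S_i$ are merely admissible compactifications with irreducible boundary; they are \emph{not} assumed to be Mori fiber spaces, and in general they are not. The very example worked out in the introduction already shows this: after one link on the affine quadric, the surface $S_1$ has Picard rank~2 and no log Mori fibration at all. So on $S_1$ there is no well-defined $\mu$, the N\oe ther--Fano inequality $\lambda>\mu$ has no meaning, and your descent has no invariant to decrease. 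The paper returns to the Sarkisov invariants only later, and only under the extra hypothesis that the $S_i$ are trivial Mori fiber spaces (Proposition~4).

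The paper's actual invariant is purely combinatorial and avoids this problem entirely. Take the minimal resolution $X$ dominating both $S$ and $S'$ (after first resolving their singularities). Because $B_S$ and $B_{S'}$ are irreducible, the boundary $B_X$ is a tree of rational curves containing a unique chain $E_0,E_1,\ldots,E_n$ joining the strict transforms of $B_S$ and $B_{S'}$. One inducts on $n$. The surface $Z_1$ is obtained by contracting, via the $K+B$ MMP, every component of the boundary except $E_0$ and $E_1$; Lemma~\ref{lem:KBneg} guarantees each terminal component of the chain is $K+B$-negative and contractible at every step. Then $E_0$ and $E_1$ are each $K+B$-extremal in $Z_1$ (same lemma), the contraction of $E_1$ recovers $S$, and the contraction of $E_0$ produces $S_1$, for which the corresponding chain has length $n-1$. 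No degree, no maximal multiplicity, no discreteness issues.

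A minor point: you worry about showing $Z_1$ has \emph{exactly} two boundary $K+B$-extremal rays. This is neither true nor needed; the introduction's example has four $K+B$-extremal rays on $Z$. What matters is only that each of the two boundary components is itself extremal and contractible, which is what Lemma~\ref{lem:KBneg} provides.
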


Our motivation for proving such a result is twofold. First it seems to us that the shift in focus from the existence of a Mori fiber structure to the property of admitting a compactification by one irreducible divisor may lead to the right statement for studying automorphisms of $\C^3$; of course this remains very hypothetical. 

More concretely, this result is also a first step to understand precisely the automorphism group of a non compact surface: in the case of a surface $V$ which admits a smooth compactification with an irreducible boundary, we are able to obtain a kind of presentation by generators and relations for the automorphism group of $V$ (see proposition \ref{prop:main}). However the striking fact is that in general the ``generators'' can not be canonically interpreted as automorphisms of $V$ but only as birational transformations $S \dashrightarrow S'$ between possibly non isomorphic smooth compactifications of $V$, inducing isomorphisms between one copy of $V$ to another. This remark was already apparent in the work by Danilov and Gizatullin \cite{GD1, GD2}; however our approach is somewhat different (for instance compare our proposition \ref{prop:main} and the example in section \ref{expl:+4} with the presentation given in \cite[$\S$7]{GD2})

In the case of surfaces admitting only singular compactification a new phenomenon arise, that is called reversion. We give an example in the paragraph  \ref{par:chain}; a systematic study of this situation can be found in \cite{BD}.  

\section{The factorization algorithm}

\subsection{Admissible surfaces} \label{section:cadre}
Here we discuss the class of admissible compactifications, and show that the  hypothesis made on the singularities and the geometry of the boundary are, in a sense,  optimal. Let us mention that it is relevant  to consider  quasi-projective surfaces $V$ and not only affine ones; for instance $\cpo \times \cpo$ minus a fiber is a non affine surface with a rich group of automorphisms.\\
 
\subsubsection*{Singularities.} 
First of all an automorphism of a quasi-projective normal surface $V$ extends as an automorphism of the minimal desingularization of $V$; this remark allows us to restrict without loss of generality to the case of a smooth surface  $V$. On the other hand it is natural to allow some kind of  singularities on the compactifications $S$  of $V$, indeed the log MMP can produce a singular variety after an extremal contraction even if the variety we started with was smooth (this is true even for surfaces).   
The widest framework where the Mori Program is (essentially) established in arbitrary dimension is the one of pairs $(Y,B_Y)$ with  klt (kawamata log terminal) singularities. However, if we want to work with a reduced boundary we have to allow a larger class of singularities; the most reasonnable choice seems then to work with dlt (divisorially log terminal) singularities. For the general definition of dlt singularities we refer the reader to   \cite[def. 2.37]{KM}; the interested reader may consult chapter 3 of \cite{Cflips} by Fujino for a complete discussion on the diverse existing definitions of  log terminal singularities. In the case  of a pair $(S,B_S)$ with  $S$ a projective surface, $B_S = \sum E_i$ a non empty reduced divisor (i.e. all the coefficients of the $E_i$ are equal to $1$) and $S \setminus B_S$ smooth, the dlt condition is equivalent to the following properties: 
\begin{itemize}
 \item Any singular point $p$ of $S$ is a point of $B_S$ which is not a crossing $E_i \cap E_j$;
 \item The $E_i$ are smooth irreducible curves with normal crossings;
 \item A singular point $p$ is locally isomorphic to a quotient of $\C^2$ by a cyclic group, that is to say the Hirzebruch-Jung singularities $A_{n,q}$ are the only ones allowed (the reader may find a discussion of these classical singularities for instance in \cite[p.99]{BHPV}). Furthermore if $C_1,\cdots,C_s$ is the minimal chain of rational curves (each with self-intersection $\le -2$) that desingularizes $p$, this is the first curve  $C_1$ that meets transversaly the strict transform of $B_S$.  
\end{itemize}
With the same notation, in the ramification formula 
$$K_{\overline{S}}+ B_{\overline{S}} = \pi^*(K_S+B_S) + \sum a_i C_i$$
we have $a_i >0$ for all $i$ (this is in fact the definition of $p$ being a log terminal singularity).  
The characterization above comes from  \cite[prop. 2.42]{KM} and from the local description of surfaces log terminal  singularities  when the boundary is reduced and non empty, that can be found  in \cite[see in particular p.57, case(3)]{Kflips}. \\
 
\subsubsection*{Geometry of the boundary.}
A first observation is that it is unreasonable to try  to extend the statement of the theorem to the case of surfaces with reducible boundary. Let us suppose indeed   that $f:S\dashrightarrow S'$ is a  birational map with $B_S$ reducible. Let $g:Z\rightarrow S$ be a birational morphism whose exceptional locus $E$ is  irreducible. Then either the image  $g(E)$  is located at the intersection  point of two components of $B_S$, and the remark \ref{rem:2voisins} (see further, at the end of  paragraph \ref{section:preuve}) implies that $g$ can not be a  $K+B$ extremal contraction ; either $g(E)$ is a general point of a component $E_i$ of $B_S$, and this time the same argument forbids  the contraction of $E_i$ to be $K+B$ extremal. A very simple explicit example is given by the identity map  $\C^2 \to \C^2$ viewed as a map from $\cpo \times \cpo$ to $\cpd$: it admits an unique base point located at the  intersection of the two rules at infinity, and the blow-up of this point is not a $K+B$ extremal contraction (it is only a $K$ extremal contraction).

A second  observation is that the  existence of a birational map that is not a morphism $f: S \dashrightarrow S'$ imposes strong  constraints on the irreducible boundary $E_0 = B_S$. Let us introduce some  notations that will also serve in the proof of the theorem. Let  $\pi:\tilde{S}\rightarrow S$ and  $\pi':\tilde{S}' \rightarrow S'$ be minimal resolutions of the singularities of $S$ and $S'$ respectively; from the characterization of dlt singularities we deduce that the total  transforms of $B_S$  and $B_{S'}$ are simple normal crossing divisors of $\tilde{S}$ and $\tilde{S'}$ respectively. Let $\tilde{S}\stackrel{\sigma}{\leftarrow} X \stackrel{\sigma'}{\rightarrow} \tilde{S}'$ be  a minimal resolution of the  base points of $f$ view as a birational map between  $\tilde{S}$ and $\tilde{S}'$.
\[\xymatrix@C4pc{& X \ar[dl]_{\sigma}  \ar[dr]^{\sigma'} \\  \tilde{S} \ar[d]_{\pi} & & \tilde{S}' \ar[d]^{\pi'} \\ S \ar@{-->}[rr]^f & & S'}\] 
We still denote by $E_0$ the strict transform of $B_S$ in $X$ or in  $\tilde{S}$.  Remember (see \cite[th. 5.2 p.410]{Har}) that in general if $h: M \to M'$ is a  birational map between normal surfaces, and $p \in M$ is a base point of $h$, then there exists a curve $C \subset M'$ such that $h^{-1}(C) = p$. This implies that at every step of the resolution $\sigma$ of $f$ there is only one base point, which is the preimage of $B_{S'}$. Thus the last blow-up of the sequence $\sigma$ produces a  divisor which is the strict transform of $B_{S'}$, and  the last blow-up of the sequence $\sigma'$ produces $E_0 = B_S$.  That is to say the curve $E_0$ in  $X$ can be contracted to a smooth point. Therefore $E_0$ is  rational (as are all the other components of $B_X$, by construction), and  in $\tilde{S}$ we have $E_0^2 \ge 0$ because the self-intersection of $E_0$ should become $-1$ in $X$ after a (non-empty) sequence of blow-ups whose at least the first is located on $E_0$. Furthermore,  after the contraction of $E_0$ from $X$ the boundary is still a simple normal crossing divisor. In consequence,  $E_0$ admits at most two neighboring components in  $B_X$. This implies that $B_S$ supports at most two singularities, and if $B_S$ contains exactly two singularities, then the base  point of $f$ must coincide with one of these singularities.\\ 

\subsubsection*{Admissible class.}
We suppose given a smooth quasi-projective surface  $V$. In view of the observations above, it is natural to define the class of admissible surfaces as the set of pairs $(S,B_S)$ with the following properties:
\begin{itemize}
  \item $S$ is a projective compactification of $V$, that is we have a fixed isomorphism $S \setminus B_S \stackrel{\sim}{\rightarrow} V$;
  \item $B_S = \sum E_i$ is a reduced divisor with each $E_i$ isomorphic to $\cpo$;
  \item $(S,B_S)$ admits only dlt singularities;	
  \item If $B_S$ is irreducible then $S$ admits at most two singularities. 
\end{itemize}
We allow the possibility of a reducible boundary mainly in order to include the  surfaces $Z_i$ with two boundary components that appear in the theorem. In this case, we will observe in the course of the demonstration that each boundary component supports at most one singularity.

\begin{rem} \label{Gchains} The class of admissible surfaces we just defined  contains in  particular the class of affine surfaces that admit a compactification by a  chain of smooth rational curves, which has been studied by  Danilov and Gizatullin \cite{GD1, GD2}. Our theorem thus applies to these surfaces. Indeed, each surface of this kind admits at least one  compactification by a chain of rational curves $C_0,C_1,\ldots, C_r$, $r\geq 1$,  whose self-intersections are respectively $0,a_1,\ldots, a_r$, where $a_1 \le -1$ and $a_i \le -2$ for all $i = 2,\cdots,r$. After contracting  the curves $C_1,\ldots, C_r$, we obtain an admissible compactification $S$ with an irreducible boundary $B_S=C_0$.  These surfaces always admit a very rich automorphism group. In particular,  it acts on the surface with an open orbit of finite complement (see \cite{G}). \\
\end{rem}

\subsection{Proof of the theorem}
\label{section:preuve}
As above, let $\pi:\tilde{S}\rightarrow S$ and $\pi':\tilde{S}' \rightarrow S'$ denote the minimal resolutions of the singularities and let $\tilde{S}\stackrel{\sigma}{\leftarrow} X \stackrel{\sigma'}{\rightarrow} \tilde{S}'$ be a minimal  resolution of the  base points of $f$. The divisor  $B_X$ is then a tree of rational curves, whose irreducible components are exceptional  for at least one  of the two  morphisms  $\pi\circ\sigma$  or $\pi' \circ \sigma'$, thus they have all a strictly negative self-intersection. Since $B_X$ is a tree, there  exists a unique sub-chain $E_0,E_1,\ldots, E_n$ of $B_X$ such that $E_0$ and $E_n$ are the strict transforms of  $B_S$ and $B_{S'}$ respectively. The minimality hypotheses imply that $E_0$ and $E_n$ are the only irreducible components with self-intersection $-1$ in $B_X$. The demonstration proceeds by induction on the number  $n$ of components in the chain joining the strict  transforms of $B_S$ and $B_{S'}$, which will also be the number of  links necessary to factorize $f$. 

We use the same notation for the curves $E_i$, $i=0,\ldots,n$ and their images or strict transforms in the different surfaces that will come into play. The self-intersection of $E_0$ is positive in $\tilde{S}$ by  hypothesis.  By definition of the resolution $X$, the  divisor $E_1$ is produced by blowing-up successively the base points of $f$ as long as they lie on  $E_0$, $E_1$ being the last divisor produced by this process. Let $Y\rightarrow \tilde {S}$ be the intermediate surface thus obtained. By construction, the image of the curves contracted by the induced birational morphism  $X \to Y$  are all located outside  $E_0$. The self-intersections of $E_0$ in $X$ or in $Y$ must in particular be equal (to $-1$). The divisor $B_Y$ is then a chain that looks as in figure \ref{fig:proof}.  
The wavy curves labelled ``Sing'' correspond to the (possible) chains of rational curves obtained by desingularization of $S$, and the wavy curve labelled  ``Aux'' corresponds to the (possible)  chain of auxiliary rational curves, each one with self-intersection $-2$, obtained by resolving the base  points of $f$ before getting  $E_1$. 
\begin{figure}[ht]
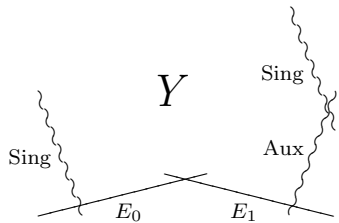

 $$\dessinpreuve$$
 \caption{The boundary divisor of $Y$.} \label{fig:proof}
\end{figure}

The lemma \ref{lem:KBneg} ensures that by running the $K+B$ MMP we can successively contract all the components of the boundary  $B_Y$ with the exceptions of $E_0$ and $E_1$. Indeed at each step, the extremities of the boundary chain support at most one singularity and thus are $K+B$ extremal, with negative self-intersection. This implies that they give rise to divisorial extremal contractions. We note $(Z,E_0+E_1)$  the dlt  pair obtained from the pair $(Y,B_Y)$ by this process.  
 $$\mygraph{
!{<0cm,0cm>;<1cm,0cm>:<0cm,1cm>::}
!{(2,3.3)}*+{X} ="X" !{(2,2.1)}*+{Y} ="Y" !{(2,0.6)}*+{Z} ="Z"
!{(0,0)}*+{S} ="S" !{(4,0)}*+{S'} ="S'" 
!{(0,1.5)}*+{\tilde{S}} ="Stilda" !{(4,1.5)}*+{\tilde{S'}} ="S'tilda"
"X"-@{->}_{\sigma}"Stilda" "X"-@{->}^{\sigma'}"S'tilda" "X"-@{->}"Y" 
"Y"-@{->}"Stilda" "Y"-@{->}"Z" "Z"-@{->}"S"
"Stilda"-@{->}_{\pi}"S" "S'tilda"-@{->}^{\pi'}"S'"
"S"-@{-->}_f"S'"
}$$
 
By construction, $Z$ dominates $S$ via the  divisorial contraction of the $K+B$ extremal curve $E_1$. Again by the lemma \ref{lem:KBneg}, $E_0$ is $K+B$ extremal, with self-intersection strictly negative in $Z$. So there exists a $K+B$ divisorial extremal contraction  $Z\rightarrow S_1$ contracting exactly $E_0$.  We obtain the first expected link and the map $f:S\dashrightarrow S'$ factorizes via a birational map $f_1:S_1\dashrightarrow S'$ for which it is straightforward to check that the length of the chain defined at the beginning of the proof is equal to $n-1$.  

\[\xymatrix{ & Z = Z_1 \ar[dl] \ar[dr] \\ S = S_0 \ar@/{}_{1pc}/@{-->}[rrrr]_{f} \ar@{-->}[rr] & &  S_1 \ar@{-->}[rr]^{f_1} & & S'}\]
We conclude by induction that we can factorize $f$ by exactly $n$ links.\findem\\

\begin{lem}
\label{lem:KBneg}
Let $(S,B_S)$ be an admissible surface. 
   \begin{enumerate}
        \item If $C \subset B_S$ is an irreducible curve with only one neighboring component in $B_S$ and  supporting at most one singularity of $S$, then $(K_S+B_S).C <0$. 
        \item Let $C \subset B_S$ be a curve supporting exactly one singularity $p$ of $S$, and note $\overline{C}$ the strict transform of $C$ in the minimal resolution of $p$. If  ${\overline{C}}^2<0$ then $C^2<0$.  
    \end{enumerate}
\end{lem}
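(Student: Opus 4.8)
The plan is to prove the two statements of Lemma~\ref{lem:KBneg} by reducing everything to an explicit computation on the minimal resolution, where all curves are smooth rational with normal crossings and the discrepancies of Hirzebruch--Jung chains are completely understood.

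\textbf{Setup for (1).} Let $\mu\colon\overline{S}\to S$ be the minimal resolution and write $\overline{C}$ for the strict transform of $C$. Since $(S,B_S)$ is dlt with reduced boundary, the exceptional locus over the (at most one) singular point $p\in C$ is a chain of smooth rational curves $C_1,\dots,C_s$, each with self-intersection $\le -2$, and by the characterization of dlt singularities it is $C_1$ that meets $\overline{C}$ transversally. Write the ramification formula $K_{\overline{S}}+B_{\overline{S}}=\mu^*(K_S+B_S)+\sum a_i C_i$ with all $a_i>0$. Then by the projection formula
\[
(K_S+B_S)\cdot C=(K_{\overline{S}}+B_{\overline{S}})\cdot\overline{C}-\sum a_i\,(C_i\cdot\overline{C})=(K_{\overline{S}}+B_{\overline{S}})\cdot\overline{C}-a_1,
\]
since $\overline{C}$ meets only $C_1$ among the exceptional curves, and transversally. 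Now $B_{\overline{S}}=\overline{C}+(\text{the one neighbor of }C)+\sum C_i$ near $\overline{C}$, so $B_{\overline{S}}\cdot\overline{C}$ counts: $1$ for the neighbor, $1$ for $C_1$, and $\overline{C}^2$ for $\overline{C}$ itself. Combined with adjunction $K_{\overline{S}}\cdot\overline{C}=-2-\overline{C}^2$ on the rational curve $\overline{C}$, we get $(K_{\overline{S}}+B_{\overline{S}})\cdot\overline{C}=-2-\overline{C}^2+\overline{C}^2+2=0$, hence $(K_S+B_S)\cdot C=-a_1<0$. If $C$ supports no singularity the computation is the same without the $C_1$ term and without $a_1$: $(K_S+B_S)\cdot C=(K_{\overline{S}}+B_{\overline{S}})\cdot C=-2-C^2+C^2+1=-1<0$. (One should double check the count of neighbors: "only one neighboring component in $B_S$" is exactly what makes $B_{\overline{S}}\cdot\overline{C}$ equal $1+\overline{C}^2$ or $2+\overline{C}^2$ according to whether $p$ exists.)

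\textbf{Setup for (2).} Here $C$ carries exactly one singularity $p$, with minimal resolution chain $C_1,\dots,C_s$ of self-intersections $-b_1,\dots,-b_s$, all $b_i\ge 2$, and $\overline{C}$ meeting $C_1$ once. Contracting $C_1,\dots,C_s$ from $\overline{S}$ sends $\overline{C}$ to $C$; the self-intersection changes by the standard formula for blowing down a chain. Concretely, if $\overline{C}\cdot C_1=1$ and $\overline{C}$ is disjoint from $C_2,\dots,C_s$, then contracting the chain raises $\overline{C}^2$ by the quantity $q/p$ where $p/q=[b_1,\dots,b_s]$ is the Hirzebruch--Jung continued fraction of the singularity $A_{p,q}$; since $0<q/p<1$, we have $C^2=\overline{C}^2+q/p<\overline{C}^2+1$, and as $C^2$ is an integer, $\overline{C}^2<0$ forces $C^2\le \overline{C}^2<0$... wait, more carefully: $C^2\le\overline{C}^2+q/p<\overline{C}^2+1\le 0$ when $\overline{C}^2\le -1$, so $C^2<1$, i.e. $C^2\le 0$; and $C^2=0$ would force $\overline{C}^2=-q/p$, impossible for an integer since $0<q/p<1$. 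Hence $C^2<0$. The cleanest way to see the "$q/p$" claim is to note that a single blow-down of a $(-1)$-curve meeting $\overline{C}$ with multiplicity $m$ changes $\overline{C}^2$ by $+m^2$, track the multiplicities through the successive blow-downs of the chain (which are all $\le$ the previous ones), and observe the total increase is strictly less than $1$ because the chain has no $(-1)$-curve to begin with; alternatively invoke the matrix/continued-fraction description of Hirzebruch--Jung chains directly.

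\textbf{Main obstacle.} The only real subtlety is bookkeeping the intersection multiplicity of $\overline{C}$ with the resolution chain and justifying the bound $0<q/p<1$ (equivalently: the self-intersection of $\overline{C}$ strictly increases but by less than $1$ upon contracting a chain of curves of self-intersection $\le -2$ that contains no $(-1)$-curve). This is classical (it is essentially the statement that Hirzebruch--Jung singularities are rational and that the intersection form of the chain is negative definite with the relevant inverse-matrix entries lying in $(0,1)$), so I would cite \cite[p.99]{BHPV} or the discrepancy computation in \cite[prop.~2.42]{KM} rather than redo it; the positivity $a_i>0$ needed in part (1) is already recorded in the excerpt. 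Everything else is adjunction plus the projection formula.

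\findem
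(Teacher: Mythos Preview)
Your argument for (1) is correct and essentially identical to the paper's: both use the projection formula together with adjunction on $\overline{C}$ and the positivity of the log discrepancy $a_1>0$.

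For (2) your route is close to the paper's but justified differently. The paper writes $\overline{C}=\pi^*C-bE-\reste$ and $K_{\overline{S}}=\pi^*K_S-cE-\reste$, computes $C^2=\overline{C}^2+b$, and then obtains $b<1$ directly from the dlt condition via the identity $a=1-b-c>0$ with $c\ge 0$. You instead identify $b$ as the Hirzebruch--Jung quantity $q/p$ and invoke $0<q/p<1$. Both are valid; the paper's version is slightly cleaner because it avoids appealing to the continued-fraction description and uses only the discrepancy inequality already recorded in the setup.

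There is however a genuine slip in your exposition of (2): you write ``as $C^2$ is an integer''. This is false --- $S$ is singular and only $\Q$-factorial, so $C^2\in\Q$ need not be integral (indeed your own formula $C^2=\overline{C}^2+q/p$ shows it is not). Fortunately the detour through ``$C^2<1$, i.e.\ $C^2\le 0$'' and the separate treatment of $C^2=0$ is unnecessary: the chain you already wrote,
\[
C^2=\overline{C}^2+q/p<\overline{C}^2+1\le 0
\]
(using that $\overline{C}^2$ \emph{is} an integer on the smooth surface $\overline{S}$, so $\overline{C}^2<0$ means $\overline{C}^2\le -1$), gives $C^2<0$ immediately. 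Delete the integrality claim and the sentences following it, and the proof stands.
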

\begin{proof}

  \indent (1) If  $C$ does not support any singularity of $S$, we have $$(K_S+B_S)\cdot C=(K_S+C)\cdot C +1=-2+1=-1$$ by  adjunction. Otherwise, let $\pi:\overline{S}\rightarrow S$ be a minimal resolution of the unique singularity supported by $C$. Let $E$ be the unique exceptional curve of $\pi$ that meets the strict transform $\overline{C}$ of $C$.  We write $K_{\overline{S}} + B_{\overline{S}} = \pi^* (K_S+B_S) + aE + \reste$, where  $\reste$  (here and further in the proof) denotes an exceptional divisor for $\pi$, whose support does not meet $\overline{C}$, and all of whose coefficients may vary. We have then 
\begin{eqnarray*}
 (K_S+B_S)\cdot C & = & (K_S+C) \cdot C + 1  =  \pi^*(K_S+C)\cdot \pi^*C +1 \\
         			& = & (K_{\overline{S}} + \overline{C} +(1-a)E + \reste) \cdot \overline{C} +1  =  \pi^*(K_S+C)\cdot \overline{C} + 1  \\
           			& = & (K_{\overline{S}}+\overline{C}).\overline{C}  + 1 - a + 1   =  -2 + 2 - a \\
             			& = & -a <0  
\end{eqnarray*}
because $(S,B_S)$  dlt pair implies  $a>0$.\\
 \indent (2)  Let  $\pi:\overline{S}\rightarrow S$ be as above. We write  $\overline{C}=\pi^*C-bE - \reste$ and  $K_{\overline{S}}=\pi^*K_S-cE -\reste$ where $c\geq0$ (otherwise $p$ would be a smooth point) and $b>0$. We have 
\begin{eqnarray*}
   C^2=(\pi^*C)^2            & =&  (\overline{C} + bE + \reste)^2 ={\overline{C}}^2+2b+(bE+\reste)^2\\
   			                & = & {\overline{C}}^2+2b+(bE+\reste)\cdot(\pi^*C-\overline{C}) ={\overline{C}}^2+2b-bE\cdot\overline{C}\\
			       	        & = &  {\overline{C}}^2+b 
\end{eqnarray*}             
On the other hand, in the logarithmic ramification formula above, we have  $a=1-b-c>0$ because $(S,B_S)$ is a dlt pair. So $1>b$, and therefore  $C^2<{\overline{C}}^2+1$: this  gives the assertion of the lemma. 
\end{proof}
\mbox{ }

\begin{rem} \label{rem:2voisins}
The lemma contains what is strictly necessary for the demonstration of the theorem. Nevertheless an easy refinement of the first assertion of the lemma leads to a more precise  caracterisation of a $K_S+B_S$ extremal boundary component $C$ of a dlt pair $(S,B_S)$. In particular: \\

\textit{A curve $C$ in $B_S$ with at least two neighboring components in $B_S$ can never be  $K_S+B_S$ extremal.}\\

\begin{proof}
If we note $n$ the number of neighbors of $C$ in  $B_S$ and $p_1,\ldots , p_r$ the singular points of $S$ supported along $C$, the same argument as in the proof of the lemma shows that $$ (K_S+B_S)\cdot C  =  -2 + \sum_{i=1}^r (1-a_{i,1}) +n,$$ where for all $i=1,\ldots, r$,   $a_{i,1}>0$ is the log discrepancy of the unique  exceptional divisor $E_{i,1}$ in the minimal resolution of the singular point  $p_i$ that meets the strict transform of $C$. 
As above we can show that the log discrepancies $a_{i,j}>0$ of the irreducible components  $E_{i,1},\ldots , E_{i,r_i}$ of the chain of exceptional divisors in the minimal resolution of $p_i$ are all strictly less than $1$ (write  $a_{i,j}$ as  $a_{i,j} = 1 - b_{i,j} -c_{i,j}$ as in the end of the proof of the lemma). If we note  $\pi:\overline{S}\rightarrow S$ the surface obtained by taking the minimal resolution of all the singular points $p_i$, we have 
\begin{eqnarray*}
  a_{i,1}E^2_{i,1}+a_{i,2} & = & (\pi^*(K_S+B_S)+\sum_{i,j} a_{i,j}E_{i,j})\cdot E_{i,1} \\
   					&= & (K_{\overline{S}}+B_{\overline{S}})\cdot E_{i,1} =(K_{\overline{S}}+\overline{C}+E_{i,1}+E_{i,2})\cdot E_{i,1} \\
					& = & -1+ E_{i,2}\cdot E_{i,1} 
\end{eqnarray*}   
where, by convention, $a_{i,2}=E_{i,2}\cdot E_{i,1}=0$ if $p_i$ admits a resolution by a blow-up with a unique exceptional divisor $E_{i,1}$.  In all cases we have  $ a_{i,1}\leq1/2$  because  $E^2_{i,1}\leq -2$ and $a_{i,2}<1$ . Finally we get   
$$  n-2+r>(K_S+B_S)\cdot C  \geq n -2 + \frac{r}{2}.$$ 
For $n=2$, this gives the assertion above. For $n=1$ and $r = 1$ we get again the first assertion of the lemma. For $n= 0 $ and $r = 2$ we obtain: if $B_S = C$ is irreducible and  supports two  singularities then $(K+B).C <0$  (but $C$ may not be $K+B$ extremal).  
\end{proof}

The inequalities above do not  \textit{a priori} exclude the possibility for a curve $C$ without a neighbor and supporting three  singularities to be  $K_S+B_S$ extremal. But in this case the singularities could no longer be of an arbitrary type. For instance, the computations above  shows that an isolated component $C$ supporting exactly three $A_{3,1}$ singularities  (i.e., each one admits a resolution by a unique exceptional rational curve with self-intersection $-3$) satisfies $(K_S+B_S)\cdot C=0$. On the other hand, let us remember from the paragraph \ref{section:cadre} that if $S$ admits more than two singularities then any birational map $S \dashrightarrow S'$ induced by an automorphism of $V$ is in fact a morphism.  

We should finally remark that neither the lemma nor the above argument tell something about the possible $K+B$ extremal curves that do not belong to the boundary: in the example given in the introduction, we had four $K+B$ extremal rays, only two of which were within the boundary.\\
\end{rem}

\section{Comments and complements} \label{par:comments}

First we discuss additional properties of our algorithm with respect to the work by Bruno and Matsuki \cite{BM}. We also obtain as a corollary of our main theorem a description of the automorphism group of $V$ when $V$ admits a smooth admissible compactification. When $V = \C^2$ this is essentially the classical theorem of Jung and Van der Kulk (see example \ref{expl:c2}), and when $V$ is the complement of a section in a surface of Hirzebruch we obtain a new way of expressing some results of Danilov and Gizatullin (see example \ref{expl:+4}).

\subsection{Relations with the log Sarkisov program}

  Let us first consider again the subchain $E_0, \cdots, E_n$ of rational curves in the boundary $B_X$ of $X$ defined in the proof. Lemma  \ref{lem:KBneg} guarantees that all the irreducible components of $B_X$ except the ones contained in that chain can be successively contracted by a process of the  $K+B$ MMP. The surface $W$  obtained by this procedure has boundary $B_W = \sum_{i=0}^nE_i$ and dominates both $S$ and $S'$ by a sequence of $K+B$  divisorial contractions: see figure \ref{fig:KBMMP} (note that $W$  is in general singular). 
\begin{figure}[ht]
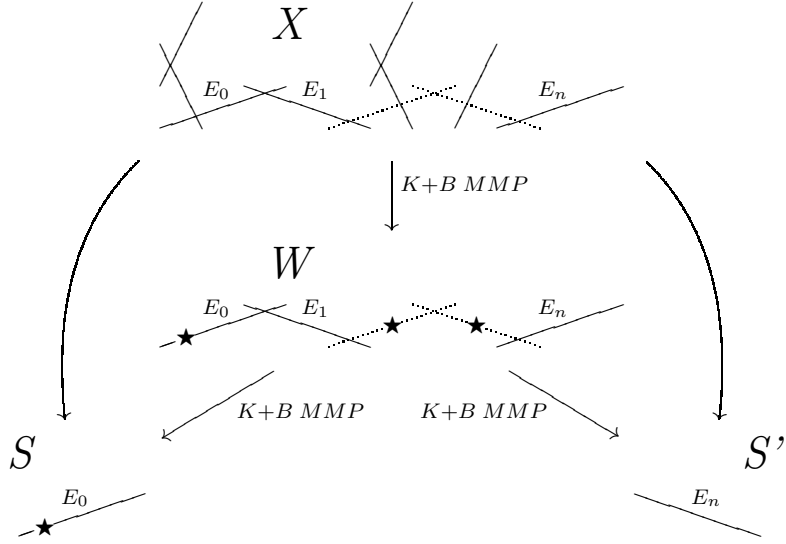

$$\mygraph{
!{<0cm,0cm>;<\size,0cm>:<0cm,\size>::}
!{(7.5,10)}*+++{\dessinresolutioncomplete} ="Comp"
!{(7.5,4.5)}*++{\dessinresolutionKplusB } ="K+B"
!{(0,0)}*++{\dessinEzerosing} ="E0"
!{(15,0)}*++{\dessinEn} ="En"
"Comp"-@{->}^{K+B\; MMP}"K+B" "Comp"-@/_2cm/@{->}"E0" "Comp"-@/^2cm/@{->}"En" 
"K+B"-@{->}^{\quad K+B\;MMP}"E0" "K+B"-@{->}_{K+B \;MMP\quad}"En" 
}$$
\caption{Log-MMP relation between $S$ and $S'$.}\label{fig:KBMMP}
\end{figure}
It follows that the pairs $(S,B_S)$ and $(S',B_{S'})$  considered in the theorem are always log MMP related in the sense of Matsuki \cite[p.128]{Mat}.  This leads to the following  result. \\

\begin{prop}
\label{prop:max}
 The birational morphism  $Z \to S$ with exceptional divisor $E_1$ constructed in the proof of the theorem is a maximal extraction . 
\end{prop}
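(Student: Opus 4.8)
The plan is to verify the two defining properties of a maximal extraction (\cite{Cor2}; the ``maximal divisorial blow--up'' of \cite{Mat}): that $Z\to S$ is an extremal divisorial contraction, and that its exceptional divisor $E_1$ realises the maximal multiplicity $\lambda$ of the log--Sarkisov configuration attached to $f$ and to an ample polarisation $H'$ of $S'$. The first property is already contained in the proof of Theorem~\ref{th:main}: $Z$ is $\Q$--factorial dlt, the morphism $Z\to S$ contracts the single curve $E_1$ (hence is extremal), and by Lemma~\ref{lem:KBneg} $E_1$ is $K_Z+B_Z$--negative with $E_1^2<0$. So everything will reduce to the statement on multiplicities, which I would obtain by bookkeeping along the chain of blow--ups resolving $f$.

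First I would fix $H'$ very ample on $S'$, set $\mathcal H_S=f^{-1}_{*}|H'|$, let $H_S$ be a general member, and write $\rho=\pi\circ\sigma\colon X\to S$, $\rho'=\pi'\circ\sigma'\colon X\to S'$. A general member of $|H'|$ avoids the finitely many indeterminacy centres of $f^{-1}$, so the strict transform $H_X$ of $H_S$ on $X$ equals $\rho'^{*}H'$; in particular $H_X$ is nef and the base locus of $\mathcal H_S$ is the single tower $p_1,\dots,p_n$ of infinitely near points blown up by $\sigma$. Writing the ramification formulae $K_X+B_X=\rho^{*}(K_S+B_S)+\sum_i a_iC_i$ and $\rho^{*}H_S=H_X+\sum_i m_iC_i$ over the $\rho$--exceptional curves $C_i$ — which are exactly the components of $B_X$ other than $E_0$ — and using that $X$ is a log resolution of $(S,B_S+\mathcal H_S)$, one gets $\lambda=\max_i m_i/a_i$. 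Since $m_i=0$ unless $C_i$ lies over the base point of $f$, it is enough to show $\lambda_C:=m_C/a_C\le\lambda_{E_1}$ for the curves $C$ over that point.

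The main step is the monotonicity along the tower, and I would first treat the case where the base point of $f$ is a smooth point of $S$. There $p_1,\dots,p_k$ lie on the successive strict transforms of $E_0=B_S$, with $C^{(k)}=E_1$ and $C^{(1)},\dots,C^{(k-1)}$ the auxiliary $(-2)$--curves, while $p_{k+1}\notin E_0$. With $\mu_i$ the multiplicity at $p_i$ of the strict transform of $H_S$, one has $\mu_i\ge1$, $\mu_1\ge\mu_2\ge\cdots$, and $m_{C^{(j)}}=\mu_1+\dots+\mu_j$. Because $E_0$ has coefficient $1$ in the reduced boundary, blowing up a point of $E_0$ does not change the log--discrepancy coefficient, so $a_{C^{(1)}}=\dots=a_{C^{(k)}}=1$; hence $\lambda_{C^{(j)}}=\mu_1+\dots+\mu_j$ is increasing for $j\le k$, and $\lambda_{E_1}=\mu_1+\dots+\mu_k\ge\lambda_{C^{(j)}}$. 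For $j=k+l>k$, each blow--up performed after leaving $E_0$ involves only exceptional curves of log--discrepancy $\ge1$ and so raises $a$ by at least $1$, whence $a_{C^{(k+l)}}\ge l+1$; while $m_{C^{(k+l)}}=m_{E_1}+\mu_{k+1}+\dots+\mu_{k+l}\le(l+1)m_{E_1}$ since $\mu_{k+i}\le\mu_k\le m_{E_1}$. Therefore $\lambda_{C^{(k+l)}}\le m_{E_1}=\lambda_{E_1}$, so $\lambda=\lambda_{E_1}$; then $1/\lambda=a_{E_1}/m_{E_1}$, the contraction $Z\to S$ of $E_1$ is crepant for $K_S+B_S+\frac1\lambda\mathcal H_S$, and being extremal and divisorial it is a maximal extraction.

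When the base point of $f$ is a singular point of $S$, the same scheme would run with the constant $a_0\in(0,1]$ — the log--discrepancy coefficient of the first curve of the resolution of that singularity — replacing $1$, and ``$\ge1$'' replaced by ``$\ge a_0$'' throughout. The hard part is the extra verification needed here: the resolution curves of that singularity, which in $X$ lie on the $E_1$--side of the boundary, must also be shown to satisfy $\lambda_C\le\lambda_{E_1}$. I expect this to follow from a direct computation on the Hirzebruch--Jung chain — the fixed part of $\rho^{*}H_S$ along such a curve only records the local multiplicity of $H_S$ at the singular point, whereas $m_{E_1}$ accumulates the multiplicities $\mu_1,\dots,\mu_k$ along the whole tower — but it is the point of the proof that requires the most care. (A cleaner alternative would be to invoke the uniqueness of the maximal extraction together with the uniqueness of the extremal divisorial extraction of a prescribed divisorial valuation over a surface, reducing the statement to the identification of $E_1$ as the maximal singularity.)
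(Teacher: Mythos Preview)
Your approach differs substantially from the paper's, and there is a genuine gap in the direct multiplicity computation.

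You write $m_{C^{(k+l)}} = m_{E_1} + \mu_{k+1} + \cdots + \mu_{k+l}$, but this is only correct when every centre $p_{k+1}, \ldots, p_{k+l}$ lies on a \emph{single} boundary component. In the actual resolution of $f$ this fails already for $n \ge 2$: since $E_1^2 \le -2$ in $X$, at least one later centre must lie on the strict transform of $E_1$, i.e.\ at a node such as $E_1 \cap C^{(k+1)}$. At such a node the recursion is $m_j = m_{j-1} + m_{i'} + \mu_j$, picking up an extra term $m_{i'}$, so $m_j$ can be much larger than your formula predicts. Your parallel estimate $a_{C^{(k+l)}} \ge l+1$ is fine, but combining it with the wrong $m$-formula does not yield $\lambda_{C^{(k+l)}} \le \lambda_{E_1}$. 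To rescue the computation one would need the proximity inequalities for $\mathcal H_S$ (for instance $\mu_k \ge \sum_{p_j \in C_k} \mu_j$), which you never invoke; the bare monotonicity $\mu_1 \ge \mu_2 \ge \cdots$ that you assert is neither justified nor sufficient. The singular case inherits the same problem, compounded with the extra verification you yourself flag as delicate.

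The paper sidesteps all of this. Its proof does not compute a single multiplicity: it uses that a maximal extraction is constructed by running a $(K+B+\tfrac{1}{\lambda}H)$-MMP over $S$ from any common model of $S$ and $S'$ (cf.\ \cite[prop.~13-1-8]{Mat}, \cite{BM}). Starting from the chain model $W$ with boundary $E_0+\cdots+E_n$, every step of this MMP is also a step of the genuine $(K+B)$-MMP (since $H$ is nef on a surface), and by Lemma~\ref{lem:KBneg} and Remark~\ref{rem:2voisins} the only $(K+B)$-extremal contractible curves in a chain are its terminal components. Hence there is a \emph{unique} sequence of $(K+B)$-contractions from $W$ down to $S$, and its last step is exactly $Z \to S$ with exceptional divisor $E_1$; so $Z\to S$ is a maximal extraction without any comparison of the $\lambda_{C_i}$. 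Your parenthetical ``cleaner alternative'' is close in spirit, but the paper's argument is simpler still: uniqueness of the MMP run over $S$, rather than uniqueness of the extraction of a prescribed valuation, is what does the work.
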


\begin{proof}
 A maximal extraction  (see \cite[prop. 13-1-8]{Mat} and \cite[p.485]{BM} for the logarithmic case) is obtained from a surface which dominates $S$ and $S'$ by a process of the $K+B$-MMP. So we may start with the surface $W$ just constructed. The precise procedure consists in running a $K+B+\frac{1}{\lambda}H$-MMP over $S$, where $\lambda$ and $H$ have been defined in the introduction. The crucial observation is that each extremal divisorial contraction of this log MMP is also a one of the genuine  $K+B$-MMP (this is obvious for surfaces as $H$ is nef, but this property actually holds in any dimension).  The fact that we are running a MMP over  $S$ guarantees that the only curves affected by the procedure are contained in the boundary.  By virtue of lemma  \ref{lem:KBneg} and remark \ref{rem:2voisins}, the only $K+B$ extremal rays contained in a chain are its terminal components. So there exists a unique sequence of  $K+B$ divisorial contractions from $W$ to $S$. It follows that the last one $Z\rightarrow S$, which has for exceptional divisor $E_1$,  is a maximal extraction.
 \end{proof}

The Sarkisov program has been initially designed as an algorithm to factorize birational maps between a class of varieties as simple as possible in the context of the  Minimal Model Program, namely,  Mori fiber spaces. Here we replaced Mori fiber spaces by another class of very simple objects : dlt pairs $(S,B_S)$ with an irreducible boundary $B_S$.  It may happen that certain pairs $(S,B_S)$ also admits a structure of a log Mori fiber space. This holds for instance for admissible compactifications of the affine plane  $\C^2$ by a smooth rational curve. Indeed, the latter admits a trivial structure of Mori fiber space $S \to pt$,  due to the fact that their Picard group is of rank one. Using Proposition \ref{prop:max} above, it is not difficult to check that for such surfaces our algorithm coincides with the log Sarkisov program of Bruno-Matsuki.  Furthermore, the factorization enjoys the following property. \\

\begin{prop}
Let $S$ and $S'$ be admissible surfaces equipped with a structure of trivial Mori fiber space and let  $f:S \dashrightarrow S'$ be a birational map extending  an automorphism of $V$.  Then each link of our algorithm strictly decreases the log Sarkisov degree $\mu$.
\end{prop}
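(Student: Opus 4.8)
The plan is to track the log Sarkisov degree $\mu = \frac{H_S \cdot C}{-(K_S+B_S)\cdot C}$ through a single link $S_{i-1} \dashleftarrow Z_i \to S_i$ of our algorithm, and show it strictly drops. First I would recall that by Proposition \ref{prop:max}, the morphism $Z_i \to S_{i-1}$ is a maximal extraction realizing the maximal multiplicity $\lambda$; the link then contracts the strict transform $E_0$ of the boundary $B_{S_{i-1}}$ to pass to $S_i$. Since $S_{i-1}$ and $S_i$ have Picard rank one (trivial Mori fiber space structure), both degrees are computed against the unique numerical class of curves, so it suffices to compare intersection numbers on $Z_i$, where the Néron–Severi group has rank two and is spanned by the classes of $E_0$ and $E_1$.

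The key step is a direct computation on $Z_i$. Writing the ramification formulas for the contraction $Z_i \to S_{i-1}$ with exceptional divisor $E_1$, one has on $Z_i$ the identities relating $K_{Z_i}+B_{Z_i}$, $H_{Z_i}$, and the pullbacks of the corresponding divisors from $S_{i-1}$, with the coefficient of $E_1$ governed by the discrepancy-type data $c_1, b_1, m_1$ and $\lambda = \lambda_1 = \frac{m_1}{c_1 - b_1}$ being maximal. Dually, contracting $E_0$ instead gives the analogous formulas over $S_i$. Intersecting the relevant divisors with a fiber class on $S_i$ (equivalently, with $E_0$ on $Z_i$ up to the contraction) and using that $E_0^2 = -1$ on $Z_i$ together with the standard Sarkisov inequality $\lambda > \mu$ (the case our algorithm is designed for, and which holds here because $f: S \dashrightarrow S'$ is not an isomorphism, so there genuinely is a maximal singularity to extract), one reads off that the new degree $\mu' = \frac{H_{S_i}\cdot C'}{-(K_{S_i}+B_{S_i})\cdot C'}$ satisfies $\mu' < \mu$. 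Concretely this is the same two-ray computation as in the classical Sarkisov program for surfaces, specialized to the situation where the two extremal rays on $Z_i$ are both boundary components.

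I would then package this as follows: set up the Néron–Severi basis of $Z_i$, express $H_{Z_i}$, $K_{Z_i}+B_{Z_i}$ in that basis using the two ramification formulas, compute the intersection matrix from $E_0^2 = -1$, $E_1^2 = $ (the value forced by $E_1$ being the exceptional curve of the maximal extraction, $-1$ after the contraction it induces but its actual self-intersection on $Z_i$ read from the chain), and $E_0 \cdot E_1 = 1$, and then evaluate the two ratios. The strict inequality $\lambda > \mu$ enters precisely at the point where one cancels the contribution of $H$ and is left with a positive multiple of $(\lambda - \mu)$ times $c_1 - b_1 > 0$.

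The main obstacle I anticipate is bookkeeping rather than conceptual: one must be careful that the divisor $H$ used to define $\mu$ on $S_{i-1}$ and the one used on $S_i$ are compatibly chosen (both are strict transforms of a fixed ample linear system on $S'=S_n$), and that the multiplicities $m_i$ transform correctly under the link — in particular one has to verify that the link does not create new base points off the boundary, which is exactly what the ``all blow-ups and contractions occur on the boundary'' feature of our algorithm guarantees, but it should be stated explicitly. A secondary subtlety is confirming that $\lambda > \mu$ persists after each link so that the argument can be iterated; this follows because if at some stage $\lambda \le \mu$ the map would already be an isomorphism or the algorithm would have terminated, contradicting $n \ge 1$ for the remaining factor $f_i$.
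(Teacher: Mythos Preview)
Your proposal is correct and follows essentially the same strategy as the paper: write the ramification formula for $K_Z+B_Z+\tfrac{1}{\mu}H_Z$ over both $S$ and $S_1$, invoke the logarithmic N\oe ther--Fano criterion to get $\lambda>\mu$ since $f$ is not an isomorphism, and then use Picard rank one on $S_1$ to conclude $\mu_{S_1}<\mu_S$. The one notable difference in execution is that the paper does not compute in the full basis $\{E_0,E_1\}$ of $N^1(Z)$ and in particular never needs the value of $E_1^2$ on $Z$; instead it intersects the pulled-back class with an auxiliary curve $C\subset Z$ chosen to meet $E_1$ but not $E_0$, so that only $E_1\cdot C>0$ and the sign of the discrepancy coefficient $(c-b)(1-\lambda/\mu)$ enter. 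This is slightly cleaner than your proposed intersection-matrix computation, which would work but requires tracking $E_1^2$ through the singular contraction.
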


\begin{proof}
Letting  $\mu_S$ and $\mu_{S_1}$ be the degrees of  $f: S \dashrightarrow S'$ and  $f_1:S_1 \dashrightarrow S'$ respectively (see the proof of the theorem for the notation), the ramification formulas read 
\begin{eqnarray*}
K_Z+B_Z + \frac{1}{\mu_S} H_Z  &=& g^*(K_S+B_S+\frac{1}{\mu_S} H_S) + (c-b-\frac{m}{\mu_S})E_1 \\
&=& {g'}^*(K_{S_1}+B_{S_1}+\frac{1}{\mu_S} H_{S_1}) + \star E_0,
\end{eqnarray*}
where  $\lambda = m/(c-b)$ is the maximal multiplicity and where  $\star$ is a coefficient which plays no role in the sequel. By definition, $ K_S+B_S+\frac{1}{\mu_S} H_S \equiv 0$ and due to the fact that all the curves in $S_1$ are numerically  proportional,  we have $\mu_{S_1} < \mu_S$ provided that  $(K_{S_1}+B_{S_1}+\frac{1}{\mu_S} H_{S_1}).C<0$ for an arbitrary curve in $S_1$.  Since $f$ is not an isomorphism the logarithmic version of the N\oe ther-Fano criterion \cite[prop. 13-1-3]{Mat} guarantees that  $\lambda > \mu_S$.  Thus, given a  curve  $C \subset Z$ intersecting $E_1$ but not  $E_0$, one checks that its image in $S_1$ satisfies
\begin{eqnarray*}
(K_{S_1}+B_{S_1}+\frac{1}{\mu_S} H_{S_1}).C  &= & ({g'}^*(K_{S_1}+B_{S_1}+\frac{1}{\mu_S} H_{S_1}) + \star E_0).C \\
&=& (c-b) \underbrace{( 1-\lambda/\mu_S)}_{<0}
 \underbrace{E_1.C}_{>0} < 0
\end{eqnarray*}
as desired. 
\end{proof}

\subsection{Isomorphisms between surfaces with a smooth compactification}
\label{sec:smooth}

In this paragraph we suppose that we have an isomorphism $f:V \to V'$ between two surfaces that admit compactifications by \textit{smooth} admissible surfaces $S$ and $S'$. It is known that if $V$ is affine and admits a smooth admissible compactification, then the pair $(S, B_S)$ is one of the following: $(\cpd, \mbox{a line})$, $(\cpd, \mbox{a conic})$, or $(\F_n, \mbox{an ample section})$. Furthermore, in the latter case, the isomorphism class of $V$ only depends on the self-intersection of $B_S$, and not on the ambiant $\F_n$ nor on the particular section chosen (see \cite{GD1, GD2} or \cite{FKZ}). 

Now let $S_i$ be one of the singular admissible compactification of $V$ that appear in the factorization of $f$. We prove in the next lemma that $S_i$ can only have one singularity, so the following definition makes sense: We say that $S_i$ has \textbf{index} $n$ if in the minimal resolution of the singularities of $S_i$ the exceptional curve which intersects the strict transform of $B_{S_i}$ has self-intersection $-n$. On the other hand if $S_i$ is any smooth compactification of $V$ we say that $S_i$ has index 1. We note $\ind(S_i)$ the index of $S_i$. 

\begin{lem} \label{lem:triangle}
 Let $S_0 = S \longleftrightarrow S_1 \longleftrightarrow \cdots \longleftrightarrow S_n = S'$ be the factorization into elementary links given by applying the theorem \ref{th:main} to $f:V\to V'$, where $S, S'$ are smooth, and $B_S^2 >0$. Then:
\begin{enumerate}
 \item Each $S_i$ has at most one singularity;
 \item For all $i = 0, \cdots, n-1$ the index  $\ind(S_i)$ and $\ind(S_{i+1})$ differ exactly by 1;
 \item If $\ind(S_i) \ge 2$ and $\ind(S_{i-1}) = \ind(S_i) + 1$ then   $\ind(S_{i+1}) = \ind(S_i) - 1$.
\end{enumerate}
\end{lem}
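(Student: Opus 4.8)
The plan is to analyze what happens in a single link $Z_i \to S_{i-1}$, $Z_i \to S_i$ by passing to the minimal resolution and tracking the chain $E_0, E_1, \ldots, E_n$ introduced in the proof of the theorem. Recall that $Z_i$ is an admissible compactification of $V$ with exactly two boundary components, say $E_{i-1}$ and $E_i$ (renaming the two curves of the chain that survive at stage $i$), and that the two contractions remove $E_i$ (giving $S_{i-1}$) and $E_{i-1}$ (giving $S_i$) respectively. I would first establish (1): by the discussion in section \ref{section:cadre}, an admissible surface with irreducible boundary has at most two singularities, and if there are two then the base point of any nontrivial $f$ sits at one of them; but here $S, S'$ are smooth and the whole factorization is forced by the single chain, so I would argue inductively that at stage $i$ only one singular point can be present on $B_{S_i}$. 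Concretely, in the minimal resolution $\tilde S_i$ the boundary is the chain obtained from $E_0, \ldots, E_n$ by keeping $E_i$ and attaching to it the Hirzebruch--Jung string coming from blowing down the rest; since the original chain has its two $(-1)$-curves only at the extremities and $B_S^2 > 0$, each intermediate contraction produces exactly one string of $\le -2$ curves attached to the surviving boundary curve, hence at most one singularity, and remark \ref{rem:2voisins} together with lemma \ref{lem:KBneg} forbids a second one from appearing.

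Next, for (2), I would compute the index directly from the local picture at the link. At $Z_i$ the boundary is $E_{i-1} + E_i$, a two-component chain; by the analysis above each of $E_{i-1}, E_i$ carries at most one singularity, resolved by a $(-2)$-string, and on the resolution $\tilde Z_i$ the total transform is a chain $(\text{string}_{i-1})\, E_{i-1}\, E_i\, (\text{string}_i)$. The contraction $Z_i \to S_{i-1}$ removes $E_i$: after blowing down $E_i$ and then successively blowing down the $(-2)$-string attached on its far side, the curve $E_{i-1}$ absorbs that string, and the self-intersection of the exceptional curve of the resulting Hirzebruch--Jung singularity on $S_{i-1}$ is read off from how many curves were blown down. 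The key bookkeeping identity is the one already used in lemma \ref{lem:KBneg}: blowing down a $(-1)$-curve raises the self-intersection of each neighbor by $1$. So passing from the picture over $S_{i-1}$ to the picture over $S_i$ amounts to moving the ``$(-1)$ marker'' from the $E_i$ side of the chain to the $E_{i-1}$ side, which shifts by exactly one the length of the $(-2)$-string that gets contracted onto the surviving boundary curve, hence shifts the index by exactly $1$. This gives $|\ind(S_i) - \ind(S_{i-1})| = 1$.

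For (3), the point is a rigidity statement: once $\ind(S_{i-1}) = \ind(S_i) + 1$ with $\ind(S_i) \ge 2$, the surface $S_i$ genuinely has a singularity, and the boundary chain of $Z_{i+1}$ (equivalently, the relevant piece of the original chain $E_0, \ldots, E_n$ in $X$) is constrained: the $(-2)$-string on the $E_{i-1}$-side of $E_i$ in $Z_i$ is nonempty, and because the full chain in $X$ has self-intersections dictated by the resolution (the only $(-1)$'s are $E_0$ and $E_n$, everything else $\le -2$), the marker cannot move back toward $E_{i-1}$ at the next step without recreating a $(-1)$ in the wrong place. Thus the only admissible move at stage $i+1$ decreases the index, i.e. $\ind(S_{i+1}) = \ind(S_i) - 1$. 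I expect the main obstacle to be making the ``marker''/self-intersection bookkeeping across the Hirzebruch--Jung strings fully rigorous, in particular checking that the $(-2)$-strings produced by the successive contractions are exactly the ones predicted and that no branching or extra singularity sneaks in; this is where remark \ref{rem:2voisins} (no boundary curve with two neighbors is $K+B$ extremal) and the minimality of the resolution $X$ do the essential work, pinning down that the factorization is the unique one compatible with the chain structure.
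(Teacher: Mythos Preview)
Your proposal has a genuine gap: the model you use for the singularities of the intermediate $S_i$ is incorrect. You assert that each singularity is ``resolved by a $(-2)$-string'' and then argue that the index changes by $1$ because ``the length of the $(-2)$-string that gets contracted onto the surviving boundary curve'' shifts by one. But the index is, by definition, the negative of the self-intersection of the \emph{first} curve in the minimal resolution chain, and that chain is in general of the shape $-k,-2,\ldots,-2$ with $k=\ind(S_i)$, not a pure $(-2)$-string. In particular its length stays constant (equal to $B_S^2$) throughout the factorization; what changes is the first self-intersection $-k$. So the length-shift bookkeeping you propose for (2) does not compute the index.

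The paper's argument is a direct case analysis on the position of the base point of the link relative to the unique singularity of $S_i$, using the intermediate surface $Y$ from the proof of the theorem. If $S_i$ has index $k$ and the base point \emph{coincides} with the singular point, then the single blow-up producing $E_{i+1}$ separates $E_i$ from the old exceptional chain and drops the first curve of that chain from $-k$ to $-(k+1)$; hence $\ind(S_{i+1})=k+1$. If the base point is \emph{away} from the singular point, then in $Y$ the old chain sits on the $E_i$ side, and contracting $E_i$ raises its first curve from $-k$ to $-(k-1)$; hence $\ind(S_{i+1})=k-1$. This gives (1) and (2) at once.

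For (3), your ``the marker cannot move back without recreating a $(-1)$ in the wrong place'' does not isolate the mechanism. The actual reason is positional: when the link $S_{i-1}\to S_i$ is of the index-decreasing type (second case above), the singularity of $S_i$ sits precisely at the image of the node $E_{i-1}\cap E_i$, whereas the base point of the next link is the image of $E_i\cap E_{i+1}$, which is a \emph{different} point of $E_i$ because the chain $E_0,\ldots,E_n$ in $X$ has simple normal crossings. So the next base point is again away from the singularity, and the second case repeats, forcing $\ind(S_{i+1})=\ind(S_i)-1$.
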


\begin{proof}
Suppose $S_i$ is smooth, with $B_{S_i}^2 = d >0$. Consider the surface $Y$ constructed in the proof of theorem \ref{th:main}, which contain $E_i$ and $E_{i+1}$ the strict transforms of the boundary of $S_i$ and $S_{i+1}$. Then the boundary of $Y$ contains a chain of $d$ curves with self-intersection $-2$, thus $S_{i+1}$ has only one singularity and has index 2 (see Fig. \ref{fig:triang}, (a), with $k = 2$). 

Now suppose $S_i$ has exactly one singularity and has index $k$. If the base point on $E_i = B_{S_i}$ coincide with the singularity of $S_i$, then the  boundary of $Y$ contains a chain of curves, the first one with self-intersection $-(k+1)$ and intersecting $E_{i+1}$, and all the other ones with self-intersection $-2$. Thus in this case $S_{i+1}$ has again exactly one singularity,  and has index $k+1$ (the picture is again Fig. \ref{fig:triang}, (a)).

Finally suppose $S_i$ has exactly one singularity, has index $k$ and the base point on $E_i = B_{S_i}$ does not coincide with the singularity. Then  the  boundary of $Y$ contains a chain of curves, the first one with self-intersection $-k$ and intersecting $E_i$. In this case $S_{i+1}$ has again exactly one singularity,  and has index $k-1$ (see Fig. \ref{fig:triang}, (b)).
\end{proof}

\begin{figure}[ht]
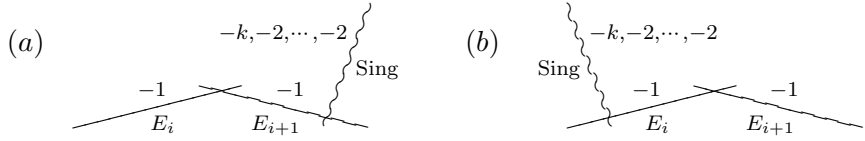
 
$$ (a) \quad \dessinTa \qquad (b) \quad \dessinTb $$
\caption{Boundary of $Y$ in the proof of lemma \ref{lem:triangle}.} 
\label{fig:triang}
\end{figure}

We say that an isomorphism $f:V \to V'$ is \textbf{triangular} \label{def:triang} if all the intermediate surfaces $S_i$ that appear in the factorization have index greater that 2, that is if all the $S_i$ except $S_0$ and $S_n$ are singular.  As an easy consequence of the lemma \ref{lem:triangle} we see that the index of the $S_i$ grows until it reaches its maximum $\ind(S_k) = k-1$, and then the index goes down until we reach the smooth surface $S_{n}$ (so we have $n = 2k -2$). We say that $k$ is the degree of the triangular isomorphism $f$.  Note that we had to suppose  $B_S^2 >0$  in the lemma \ref{lem:triangle} otherwise all the $S_i$ are  smooth; in this case (for instance for $\cpo \times \cpo$ minus a fiber)  the notion of a triangular automorphism  coincide with the notion of a link.

If $S$ is smooth, with $B_S^2 = d \ge 0$, to each  $p \in B_S$ we can associate a rational pencil $\mathcal{P}$ whose $B_S$ is a member: take the curves in the linear system $|B_S|$ with a local intersection number  with $B_S$ at $p$ equal to $d$. The resolution of the base locus of $\mathcal{P}$ gives a surface $\hat{S}$ and a fibration $ \hat{S} \to \cpo$ with a unique singular fiber, and the last exceptionnal divisor produced is a section $C$ for this fibration with self-intersection $-1$. One can prove (see \cite{BD}, section 2) that by a sequence of contractions one can construct a morphism from $\hat{S}$ to $\F_1$, where the strict transform of $C$ is the exceptional section. We will use this point of view in the example \ref{expl:+4}, see Fig. \ref{fig:models}.  
 
One can perform a sequence of elementary transformations starting from $\hat{S}$: choose a point on the fiber $B$ which is the strict transform of $B_S$, blow-up this point to produce $B'$, then blow-down $B$. The intermediate variety corresponds to the surface $Y$ in the proof of lemma \ref{lem:triangle}. A sequence of such elementary transformations clearly preserves the fibration induced by $\mathcal{P}$. As in lemma \ref{lem:triangle} we see that the self-intersection of the section $C$ goes down until reaching its minimum $-k$, then it grows until reaching $-1$ again. We call such a sequence a \textbf{fibered modification} of degree $k$.
 
\begin{lem}\label{lem:relation}
\begin{enumerate}
 \item If $f :V \to V'$ is a triangular isomorphism, and $\mathcal{P}$, $\mathcal{P}'$ are the pencil on $S, S'$ associated with the base point of $f$, $f^{-1}$, then $f$ sends the pencil $\mathcal{P}$ on $\mathcal{P}'$;
 \item Conversely if $f : V \to V'$ sends  $\mathcal{P}$ on $\mathcal{P}'$, then $f$ is triangular;
 \item If $f :V \to V'$  and $g :V' \to V''$ are two triangular isomorphisms such that the base point of $f^{-1}$ and $g$ coincide, then  the composition $g \circ f : V \to V''$ is still a triangular isomorphism.  
\end{enumerate}
\end{lem}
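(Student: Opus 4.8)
The three assertions are intertwined, the crux being~(1); part~(2) is essentially its converse and part~(3) is then formal. Throughout I use the correspondence set up just above the lemma between the factorization of a triangular isomorphism furnished by Theorem~\ref{th:main} and a fibered modification: after resolving the pencils $\mathcal P$ and $\mathcal P'$ attached to the end points, the surfaces $Z_i$ and $S_i$ become the intermediate ruled surfaces of a sequence of elementary transformations, and conversely such a sequence contracts down to a factorization of the kind produced by the theorem.

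For~(1), let $\rho:\hat S\to\cpo$ be the fibration obtained by resolving the base locus of $\mathcal P$, carrying its distinguished $(-1)$-section $C$ and its unique singular fiber, with $B$ (the strict transform of $B_S$) a general fiber. The key step is to check that the minimal resolution $X$ of $f$ already resolves $\mathcal P$, equivalently that $X$ dominates $\hat S$; this is exactly where the hypothesis that $f$ is triangular enters, through the combinatorial description of the index sequence in Lemma~\ref{lem:triangle} and the shape of the singular fiber of $\rho$. Granting this, $X$ carries a morphism to $\cpo$ whose general fiber is the strict transform of a general member of $\mathcal P$; these fibers are pairwise disjoint, hence disjoint from the finitely many curves contracted by $\sigma':X\to S'$, and so descend to a pencil $\mathcal Q$ on $S'$ with $B_{S'}$ as a member (the image of the fiber component $E_n$). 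By construction $f$ carries $\mathcal P$ to $\mathcal Q$. Finally $\mathcal Q=\mathcal P'$: in the fibered modification realizing the factorization the terminal $(-1)$-section lies over the point of $B_{S'}$ that is the base point of $f^{-1}$, so the generic member of $\mathcal Q$ meets $B_{S'}$ only at that point; being a one-dimensional subsystem of $|B_{S'}|$ contained in $\mathcal P'$, which is itself one-dimensional, it coincides with~$\mathcal P'$.

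For~(2), assume conversely that $f$ carries $\mathcal P$ to $\mathcal P'$. A common resolution $X$ of $f$ then resolves both pencils at once, so $f$ lifts to a birational map $\hat S\dashrightarrow\hat S'$ commuting with the projections to $\cpo$, which is therefore a composition of elementary transformations performed in fibers. Following the self-intersection of the distinguished section through this composition as in Lemma~\ref{lem:triangle} --- it can only decrease, then increase, and the singular fiber forces it strictly below $-1$ since $f$ is not an isomorphism and $B_S^2>0$ --- shows that, after contracting back to the surfaces $S_i$, every intermediate $S_i$ with $0<i<n$ is singular; that is, $f$ is triangular.

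For~(3), by~(1) the map $f$ sends the pencil $\mathcal P$ on $S$ at the base point of $f$ to the pencil $\mathcal P'$ on $S'$ at the base point of $f^{-1}$; by hypothesis this is also the base point of $g$, so again by~(1) the map $g$ sends $\mathcal P'$ to the pencil $\mathcal P''$ on $S''$ at the base point of $g^{-1}$. Hence $g\circ f$ sends $\mathcal P$ to $\mathcal P''$; as the base points of $f^{-1}$ and $g$ agree there is no cancellation, so the base point of $g\circ f$ is that of $f$, the base point of $(g\circ f)^{-1}$ is that of $g^{-1}$, and $g\circ f$ is not an isomorphism, whence $\mathcal P$ and $\mathcal P''$ are the pencils attached to $g\circ f$ in the sense of~(2); that part then gives that $g\circ f$ is triangular. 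The recurring obstacle is the comparison of the two viewpoints --- showing in~(1) that resolving $f$ resolves $\mathcal P$, and in~(2) that the lifted map of ruled surfaces is genuinely a fibered modification rather than one whose section returns to self-intersection $-1$ prematurely --- and both reduce to bookkeeping with the index sequence of Lemma~\ref{lem:triangle} and the singular fiber of $\rho$.
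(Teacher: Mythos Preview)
Your argument for (1) and (3) matches the paper's: for (1) you observe that the minimal resolution of a triangular $f$ dominates $\hat S$, so $f$ factors as the $d$ blow-ups resolving $\mathcal P$, a fibered modification, and $d$ blow-downs; for (3) you chain (1) and then apply (2). This is exactly what the paper does.

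For (2) your route diverges from the paper's, and the divergence is where the one real gap sits. You lift $f$ to a fibration-preserving map $\hat S\dashrightarrow\hat S'$, write it as a composition of elementary transformations, and then assert that the self-intersection of the distinguished section ``can only decrease, then increase,'' appealing to Lemma~\ref{lem:triangle}. But Lemma~\ref{lem:triangle} is proved for the specific factorization coming out of Theorem~\ref{th:main}, not for an arbitrary composition of elementary transformations preserving a fibration; a priori such a composition could touch the singular fiber, or hit the section repeatedly so that its self-intersection oscillates. You acknowledge this in your final sentence, calling it ``bookkeeping,'' but it is precisely the content of (2) and not just bookkeeping: one must rule out the section returning to $-1$ before the end, which is equivalent to ruling out an intermediate smooth $S_i$.

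The paper sidesteps this entirely with a short contradiction. Since $f$ sends $\mathcal P$ to $\mathcal P'$, its base point is that of $\mathcal P$, so the factorization from Theorem~\ref{th:main} begins with some triangular piece $f_1$. If $f\neq f_1$, the next piece $f_2$ has base point distinct from that of $f_1^{-1}$ by minimality; but $f\circ f_1^{-1}$ still sends $f_1(\mathcal P)$ to $\mathcal P'$, forcing its base point to be that of $f_1(\mathcal P)$, namely the base point of $f_1^{-1}$ by (1) --- contradiction. This avoids any direct analysis of the section on the ruled model. If you want to keep your approach, you need to supply the missing monotonicity argument; otherwise the paper's contradiction is both shorter and complete.
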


\begin{proof}
 \begin{enumerate}
\item This follows from the observation that if $f$ is a triangular isomorphism of degree $k$, then $f$ can be factorized as the $d$ blow-ups giving the resolution of the base locus of $\mathcal{P}$, followed by a fibered modification of degree $k$, and followed finally by $d$ blow-downs.
\item First the base locus of $f$ has to be contained in the base locus of $\mathcal{P}$, thus the resolution of $f$ began by the $d$ blow-ups corresponding to $\mathcal{P}$. Thus the factorization of $f$ began with a triangular isomorphism $f_1$. But then $f$ has to be equal to $f_1$, otherwise the base point of $f \circ f_1^{-1}$ would have to be different from the base point of $f(\mathcal{P})$: a contradiction.
\item By the assertion (1) $f(\mathcal{P}) = \mathcal{P}'$ and $g(\mathcal{P}') = \mathcal{P}''$, thus $g\circ f (\mathcal{P}) = \mathcal{P}''$ and by (2) we obtain that $g\circ f$ is triangular.
\end{enumerate}
\end{proof}

If $f:V \to V'$ and $g:V' \to V''$ are isomorphisms and $S,S',S''$ are smooth compactifications of $V,V'$ and $V''$ such that the base point of $f^{-1}$ and $g$ coincide,  we say that $f$ and $g$ are in \textbf{special position}, otherwise we say that they are in \textbf{general position}. 
Then the following is a simple consequence of our main theorem.

\begin{prop}\label{prop:main}
Suppose $V, V'$ are two quasi-projective surfaces which admit smooth admissible compactifications $S, S'$.
 \begin{enumerate}
  \item Any isomorphism $f:V\to V'$ is a composition of triangular isomorphisms $f = f_r \circ \cdots \circ f_1$, with each pair $f_{i}, f_{i+1}$ in general position.
 \item \label{point:rel} If we consider a composition $g = g_r \circ \cdots \circ g_1$ of triangular isomorphisms with any two successive $g_i$ in general position, then the factorization of $g$ given by the theorem \ref{th:main} is equal to the concatenation of the factorizations of each $g_i$. 
 \end{enumerate}
\end{prop}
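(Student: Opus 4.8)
The plan is to derive Proposition~\ref{prop:main} as a combination of Theorem~\ref{th:main}, Lemma~\ref{lem:triangle} and Lemma~\ref{lem:relation}, by cutting the factorization produced by the theorem at the places where consecutive intermediate surfaces are smooth.

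First I would prove (1). Apply Theorem~\ref{th:main} to $f:V\to V'$ to get a factorization $S=S_0\longleftrightarrow S_1\longleftrightarrow\cdots\longleftrightarrow S_n=S'$ through admissible compactifications $S_i$, with links $Z_i\to S_{i-1}$, $Z_i\to S_i$. If $B_S^2<0$ the situation is degenerate (all $S_i$ are smooth and each link is itself triangular of degree $1$, in fact an isomorphism of pencils up to the fiber swap), so assume $B_S^2\ge 0$; then by the discussion before Lemma~\ref{lem:relation} and by Lemma~\ref{lem:triangle}, $\ind(S_i)$ changes by exactly $1$ at each step, starts and ends at $1$, and stays $\ge 1$. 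Hence the set of indices $i$ with $\ind(S_i)=1$, say $0=i_0<i_1<\cdots<i_r=n$, cuts the chain into blocks; on each block $S_{i_{j-1}}\longleftrightarrow\cdots\longleftrightarrow S_{i_j}$ the index goes up, reaches a maximum, and comes back down to $1$, so by the definition of a triangular isomorphism (and the fact that the composite of consecutive links over a fixed $V$ is still an isomorphism of the quasi-projective surfaces), the induced map $f_j:V_{i_{j-1}}\dashrightarrow V_{i_j}$ between the open parts is a triangular isomorphism, and $f=f_r\circ\cdots\circ f_1$. It remains to see that $f_j$ and $f_{j+1}$ are in general position: this is exactly what the cut guarantees, for if the base point of $f_j^{-1}$ and of $f_{j+1}$ coincided on $S_{i_j}$, then by Lemma~\ref{lem:relation}(3) the composite $f_{j+1}\circ f_j$ would again be triangular, contradicting that $\ind(S_{i_j})=1$ is a strict local minimum forced by Lemma~\ref{lem:triangle}(3) — in other words a genuine ``valley'' in the index profile occurs precisely when the two adjacent triangular pieces are in general position.

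Next I would prove (2), which is essentially the converse bookkeeping. Given $g=g_r\circ\cdots\circ g_1$ with each $g_j$ triangular and each consecutive pair in general position, run the factorization of each $g_j$ from the theorem: each produces a chain of index profile $1,2,\dots,k_j-1,\dots,2,1$ through surfaces $S^{(j)}_\bullet$, and one concatenates them along the common smooth ends. I must check that this concatenated chain is itself \emph{the} factorization that Theorem~\ref{th:main} assigns to $g$. Since the factorization in the theorem is constructed canonically from the minimal resolution $X$ of the base points of $g$ (it is the unique subchain $E_0,\dots,E_n$ of $B_X$ joining the strict transforms of $B_S$ and $B_{S''}$, contracted one curve at a time from each end), it suffices to verify that the resolution of $g$ decomposes correspondingly: the key point is that because $g_j$ and $g_{j+1}$ are in general position, resolving $g$ never ``shortcuts'' across the smooth surface $S^{(j)}$ — the base locus of $g_{j+1}\circ g_j$ strictly contains the point over which $g_j$ was resolved only if they were in special position (Lemma~\ref{lem:relation}(2)--(3)), so the length of the joining chain for $g$ is the sum of the lengths for the $g_j$, and by uniqueness of the subchain the two factorizations coincide.

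The main obstacle I anticipate is part (2), specifically making rigorous the claim that resolving the composite does not produce a shorter (or differently-arranged) chain than the concatenation — i.e. that ``general position'' is exactly the obstruction to cancellation in the resolution. This is where I would lean hardest on Lemma~\ref{lem:relation}: assertion (2) characterizes triangularity by the pencil being respected, assertion (3) tells us when a composite of triangulars is triangular, and their contrapositive says that at a general-position junction the composite fails to be triangular, which (via Lemma~\ref{lem:triangle}(3)) forces the index profile of $g$ to actually touch $1$ at that junction, hence forces the canonical factorization of $g$ to have an intermediate \emph{smooth} surface exactly there. Once that is pinned down, matching the two chains is a uniqueness argument using the canonical description of the factorization in the proof of Theorem~\ref{th:main}, plus the observation that a triangular isomorphism is determined by its degree together with its base point (so each block is uniquely the factorization of the corresponding $g_j$).
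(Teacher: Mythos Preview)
Your plan for (1) coincides with the paper's: run Theorem~\ref{th:main} and cut the chain at the smooth intermediate surfaces. Your argument for general position, however, is more tangled than necessary and flirts with circularity (you invoke Lemma~\ref{lem:relation}(3), whose relevance presupposes that the factorization of the composite is the concatenation --- essentially what part~(2) asserts). The direct reason is simpler: in the subchain $E_0,\ldots,E_n\subset B_X$ the two neighbours $E_{i_j-1}$ and $E_{i_j+1}$ of $E_{i_j}$ meet it at \emph{distinct} points (the boundary is simple normal crossing), and after contracting to $S_{i_j}$ these two points are exactly the proper base points of $f_j^{-1}$ and of $f_{j+1}$.

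For (2) the paper's argument is much more concrete than your sketch, and bypasses index profiles and Lemma~\ref{lem:relation} entirely. One reduces at once to $r=2$: if $g:V\to V''$ and $h:V''\to V'$ are in general position, then on the smooth compactification $S''$ the proper base points of $g^{-1}$ and of $h$ are distinct, so one may resolve both \emph{simultaneously} starting from $S''$. The resulting surface is then the minimal resolution of $h\circ g$, and its boundary chain is visibly the concatenation of the chains for $g$ and $h$; hence the factorization of $h\circ g$ from Theorem~\ref{th:main} is the concatenation of those of $g$ and $h$. Your ``no shortcut'' intuition is exactly this, but the simultaneous-resolution observation makes it a one-line proof. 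Finally, drop the parenthetical claim that ``a triangular isomorphism is determined by its degree together with its base point'': this is false (the fibered modification involves further choices of centres), and fortunately it is not needed.
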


\begin{proof}
\begin{enumerate}
\item Let  $S_0 = S \longleftrightarrow S_1 \longleftrightarrow \cdots \longleftrightarrow S_n = S'$ be the factorization into elementary links given by the theorem \ref{th:main}. Consider the minimal index $i \ge 1$ such that $S_i$ is smooth. Then by the lemma \ref{lem:triangle} the sequence of links  $S \longleftrightarrow S_1 \longleftrightarrow \cdots \longleftrightarrow S_i$ gives a triangular isomorphism. We conclude by induction on the number of links in the factorization of $f$.
\item This assertion follows from the following claim: if $f= h \circ g$ where $g :V \to V''$, $h:V'' \to V'$ are in general position, then the factorization of $h \circ g$ is the concatenation of the factorization of $g$ and $h$. This is an easy observation once we remark that if we  start from a smooth compactification $S''$ of $V''$ and we simultaneously resolve the base points of $g^{-1}$ and $h$ (this is possible because we suppose they are distinct), we obtain a surface that is the minimal resolution of $f$.
\end{enumerate}
\end{proof}

One can think of the proposition \ref{prop:main} as a kind of presentation by generators and relations. In particular the assertion (2) says that there is no relation except the trivial ones given by lemma \ref{lem:relation}, (3). However, even if $S = S'$ and $f$ is an automorphism of $V$, in general the triangular isomorphims $f_i$ are not birational transformations between two isomorphic compactifications of $V$. We give an example in paragraph \ref{expl:+4} of a situation where two kinds of compact models ($\F_0$ and $\F_2$) come into play. If one insists in having generators that live on a chosen model, one has to chose some way to pass from each possible model to the chosen model. This is what is done in \cite{GD2}, where the relations are expressed in terms of almagamated products. However, one loses the simplicity of the statement (\ref{point:rel}) in the proposition \ref{prop:main}. 

In their pioneering work \cite{FM}, Friedland and Milnor noticed that the theorem of Jung allows to obtain some normal forms up to conjugacy for automorphisms of $\C^2$. This was the starting point for an exhaustive study of the possible dynamical behaviour of an automorphims of $\C^2$ (see \cite{BS8} and references therein). As a consequence of proposition \ref{prop:main}, we obtain normal forms for the automorphisms of a surface $V$ which admits a smooth admissible compactification.

\begin{cor}\label{cor:FMlike}
Let $V$ be a quasi-projective surface which admits a smooth admissible compactification $S$, and let $f : S \dashrightarrow S$ be a birational map which induces an automorphism on $V$. Then there exists another smooth admissible compactification $S'$ of $V$ and a map $\varphi:S \dashrightarrow S'$ such that $g = \varphi f \varphi^{-1} :S' \dashrightarrow S'$ has one of the following two properties:
\begin{enumerate}
 \item the pair $g,g$ is in general position (that is, $g$ and $g^{-1}$ have distinct proper base point);
 \item either $g$ is biregular or $g$ is a triangular automorphism with the pair $g,g$ in special position.
\end{enumerate}
\end{cor}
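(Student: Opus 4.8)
The plan is to reduce Corollary \ref{cor:FMlike} to a combinatorial statement about the sequence of indices occurring in the factorization of $f$, and then to play the same game that Friedland--Milnor play with the Jung decomposition of $\mathrm{Aut}(\C^2)$. Write $f:S\dashrightarrow S$ as a composition of triangular isomorphisms $f = f_r\circ\cdots\circ f_1$ with successive pairs in general position, as provided by Proposition \ref{prop:main}(1). The key point, exactly as in \cite{FM}, is that a cyclic word of this kind is ``cyclically reduced'' precisely when the pair $f_1,f_r$ is also in general position; conjugating $f$ by a suitable $\varphi$ amounts to cyclically permuting the word $f_r\cdots f_1$ (and absorbing a boundary factor), so I would first establish:

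\emph{Step 1 (conjugation acts by cyclic permutation).} If $f = f_r\circ\cdots\circ f_1$ with all successive pairs in general position and $S_i$ the intermediate smooth compactifications, set $\varphi = f_1^{-1}$; then $\varphi f\varphi^{-1} = f_1\circ f_r\circ\cdots\circ f_2$, which is again a composition of triangular isomorphisms, now between smooth compactifications of $V' := S_1\setminus B_{S_1}\cong V$, and by Proposition \ref{prop:main}(2) this is its own theorem-\ref{th:main} factorization provided each new successive pair is still in general position. The only pair whose position may have changed is $(f_1,f_r)$ read cyclically.

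\emph{Step 2 (dichotomy).} If for \emph{some} cyclic rotation all $r$ successive pairs (including the wrap-around one) are in general position, then after the corresponding conjugation $\varphi$ the map $g=\varphi f\varphi^{-1}$ has the property that $g$ and $g^{-1}$ have distinct proper base points: this is case (1), because the proper base point of $g$ is the base point of $f_{i_0}$ and that of $g^{-1}$ is the base point of $f_{i_0-1}^{-1}$ for the first factor $i_0$ of the rotated word, and these are distinct exactly by the general-position hypothesis for that cyclic pair. If on the contrary \emph{every} cyclic rotation has a wrap-around pair in special position, then by Lemma \ref{lem:relation}(3) one can fuse two consecutive triangular factors into a single triangular one, strictly shortening $r$; iterating, either we land in case (1) after some rotation, or we reduce to $r=1$ (a single triangular automorphism $g$ with $g,g$ necessarily in special position, since otherwise $r$ could not have been forced down), or to $r=0$ (i.e. $f$ extends biregularly to $S$, equivalently the length-$n$ chain of Theorem \ref{th:main} is empty and $g$ is biregular). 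Tracking the conjugating maps through this process produces the desired $\varphi:S\dashrightarrow S'$.

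The main obstacle I anticipate is \emph{Step 1}, specifically checking that conjugation by $\varphi=f_1^{-1}$ really does implement a cyclic shift of the word \emph{as a factorization in the sense of Theorem \ref{th:main}}, not merely as an abstract composition of triangular maps. One has to be careful that $f_1^{-1}$ is itself a (triangular) birational map between smooth admissible compactifications and that the compactification $S'$ one lands on is again admissible and smooth; this is where the hypothesis $B_S^2>0$ and the structure results recalled at the start of \S\ref{sec:smooth} (the pair $(S,B_S)$ is forced to be $(\cpd,\text{line})$, $(\cpd,\text{conic})$, or $(\F_n,\text{ample section})$, and the iso class of $V$ depends only on $B_S^2$) are needed to guarantee that all the $S_i$ in a rotated word are still smooth admissible compactifications of a surface isomorphic to $V$. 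The degenerate case $B_S^2\le 0$ (e.g. $\cpo\times\cpo$ minus a fiber), where all intermediate surfaces are already smooth and ``triangular'' coincides with ``single link'', should be handled separately but is immediate: there the word is already its own reduction and the dichotomy is trivial. Once Step 1 is in place, Steps 2 and the bookkeeping of $\varphi$ are a routine transcription of the Friedland--Milnor argument, using Lemma \ref{lem:relation} in place of the amalgamated-product relations in $\mathrm{Aut}(\C^2)$.
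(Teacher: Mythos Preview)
Your approach is essentially the paper's own: factor $f=f_r\cdots f_1$ via Proposition~\ref{prop:main}(1), and when the cyclic pair $(f_r,f_1)$ is in special position, conjugate and fuse using Lemma~\ref{lem:relation}(3) to drop $r$ by one, then induct. The paper does exactly this, only more directly: it checks the single conjugation by $f_r^{-1}$, giving $f'=f_{r-1}\cdots f_2\,(f_1f_r)$ on $S_{r-1}$, and invokes induction on $r$; your Step~2 dichotomy collapses to this anyway, since for every cyclic rotation the only pair that can fail to be in general position is always the original wrap-around $(f_r,f_1)$, so ``some rotation has all pairs general'' is equivalent to ``$(f,f)$ is already in general position''.

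Two small corrections. First, a direction slip: with $g=\varphi f\varphi^{-1}$ and $f_1:S\to S_1$, you need $\varphi=f_1$ (not $f_1^{-1}$) to land on $g=f_1f_r\cdots f_2:S_1\dashrightarrow S_1$. Second, the ``main obstacle'' you flag is not one: the intermediate compactifications occurring in Proposition~\ref{prop:main} are smooth admissible by construction (this is what Lemma~\ref{lem:triangle} and the very definition of a triangular isomorphism provide), so conjugating by a prefix $f_k\cdots f_1$ automatically lands you on a smooth admissible $S'=S_k$, and Proposition~\ref{prop:main}(2) already certifies that the rotated-and-fused word is the Theorem~\ref{th:main} factorization of the conjugate. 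No separate treatment of the case $B_S^2\le 0$ is needed.
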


\begin{proof}
 Suppose $f$ is not biregular, and consider the factorisation $f = f_r \cdots f_1$ into triangular isomorphisms given by the proposition \ref{prop:main}, (1). If the pair $f,f$ is in special position and $f$ is not triangular (that is, $r \ge 2$), then we consider the conjugate $f' = f_r^{-1} f f_r$. By hypothesis $f_1$ and $f_r$ are in special position so $f_1 f_r$ is a triangular isomorphism, in consequence  $f' = f^{r-1} \cdots f_2 (f_1 f_r)$ admits a factorization into $r-1$ triangular isomorphisms, and we are done by induction on the number of triangular isomorphisms necessary to factorize $f$. 
\end{proof}

In particular if $g$ satisfies property $(1)$ in the conclusion of the corollary \ref{cor:FMlike}, and if $n$ links are necessary to factorize $g$, then exactly $n|k|$ links are necessary to factorize the iterate $g^k$ (where $k \in \Z$). Such a map is similar to a composition of generalized H\'enon maps, so one can expect that these maps will always present a chaotic dynamical behaviour. On the other hand, any finite automorphism of $V$, or any one-parameter flow of automorphisms of $V$, will always correspond to the case (2) in the corollary \ref{cor:FMlike}. 

\section{Examples}

In the first three paragraphes of this section, we apply our algorithm to describe the automorphism groups of certain affine varieties admitting a smooth admissible compactification.  Since we hope  that a possible generalization of our results in higher dimension could help to understand the structure of the automorphism group of the affine space $\C^3$, we first check that we can recover the structure of the automorphism group of $\C^2$  using our algorithm. Then we complete the example of an affine quadric surface $\CP^1\times \CP^1\setminus D$ which served as a motivation in the introduction, recovering the description given in \cite{Lamyquad}. Finally, we consider the more subtle situation of the complement of a smooth rational curve in $\CP^1\times \CP^1$  with self-intersection $4$. We give a complete description in terms of isomorphisms preserving certains pencils of rational curves which is simpler than the one obtained by Danilov and Gizatullin (compare with \cite[\S 7]{GD2}). 

In the last paragraph we illustrate the notion of chain reversion. We refer to \cite{BD} for a detailed study of this situation.

\subsection{Automorphisms of $\C^2$}
\label{expl:c2}
Here we derive Jung's Theorem from the description of the triangular maps   which appear in the decomposition of an automorphism of $\C^2$. We also refer the reader to \cite{LamyJung} and \cite{Mat}, which contain proofs of Jung's Theorem derived from the philosophy of (log) Sarkisov program (but not formulated in the language of Mori Theory in the former).  
Since $(\cpd,L)$,  where $L$ is a line, is the only smooth admissible compactification of $\C^2$, our algorithm leads to a decomposition of an arbitrary
polynomial automorphism $f$ of $\C^2$ into a sequence of triangular birational maps $j_i:\cpd\dashrightarrow\cpd$ (see section \ref{sec:smooth} for the definition of a triangular map). Let $d_i\geq 2$ be the degree of $j_i$. Each of these maps are obtained as a sequence of $2d_i-2$ links of the form 
 $$ \cpd \longleftrightarrow \cpd(2) \longleftrightarrow \cpd(3) \longleftrightarrow \cdots \longleftrightarrow \cpd(d_i) \longleftrightarrow \cdots \longleftrightarrow \cpd(2) \longleftrightarrow \cpd$$
where each  $\longleftrightarrow$ denotes an elementary link,  and where  $\cpd(d)$ denotes the weighted projective plane $\cpd(d,1,1)$, obtained from the Hirzebruch surface $\mathbb{F}_d\rightarrow \mathbb{P}^1$ by contracting the section with self-intersection $-d$. The automorphism of $\C^2\subset\cpd$ induced by $j_i$ extends to an automorphism  $\sigma_i$ of $\cpd(d_i)$, and  the above decomposition can be thought as a conjugation $j_i = \varphi^{-1} \sigma_i \varphi$, where $\varphi: \cpd \dashrightarrow \cpd(d_i)$ denotes the birational map induced by the identity on $\C^2$.  This implies in particular that each $j_i$ considered as an automorphism of $\C^2$  maps a pencil $\mathcal{P}$ of parallel lines in $\C^2$ isomorphically onto a second pencil of this type. So there exists affine automorphisms $a_{i}, a_{i+1}$ of $\C^2$ such that $a_i j_i a_{i+1}$ preserves a fixed pencil, say the one given by the second projection $\C^2\rightarrow \C$. Thus each $a_i j_i a_{i+1}$ is a triangular automorphism of $\C^2$ in the usual sense, and via the Prop. \ref{prop:main} we recover the description of the automorphism group of $\C^2$ as the amalgamated product of its subgroups of affine and triangular automorphisms over their intersection. 

\subsection{Automorphism of the affine quadric surface $\CP^1\times \CP^1\setminus D$ }

 We consider again the birational map  $f:\CP^{1}\times\CP^{1}\dashrightarrow\CP^{1}\times\CP^{1}$ given in the introduction. With the notation of the theorem's proof, the union of the boundary of the resolution  $X$ constructed in the introduction and of the strict transforms $D_{+}$ and $D_{-}$  of the members of the standard rulings on $\mathbb{P}^1\times \mathbb{P}^1$ passing through the base point $p=\left(\left[1:0\right],\left[1:0\right]\right)$ of $f$  is described by the figure \ref{fig:quad1}.
\begin{figure}[ht]
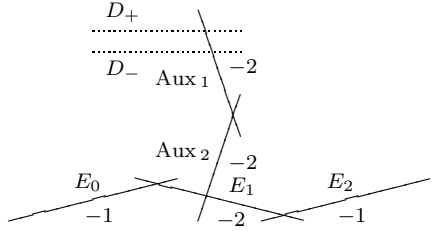

 $$\dessinresolutionbasicbis$$
\caption{Resolution of $f$.} \label{fig:quad1}
\end{figure}

Our algorithm gives a factorization  $f:\CP^{1}\times\CP^{1}\leftrightarrow S_{1}\leftrightarrow \CP^{1}\times\CP^{1}$, where the surface $S_1$  is obtained from $X$ by contracting the curves  $E_{0},E_{2}$ onto smooth points and the two auxiliary curves onto a singularity of type $A_{3,2}$ supported on $E_1$.  The Picard group of $S_1$ is isomorphic to $\mathbb{Z}^2$,  generated by the strict transforms of  $D_{+}$ and $D_{-}$, and the latter also generate the only $K+B$ extremal rays on $S_1$.  One checks further that $S_1$ dominates  $\CP^{2}$ via the divisorial contraction of any of these two curves.  So, in contrast with the situation in the log-Sarkisov program of Bruno-Matsuki,   $S_1$ does not admit a Mori fiber space structure. 

     We may identify the affine quadric  $V=\left\{ w^{2}+uv=1\right\} \subset\C^{3}$ with  $\CP^{1}\times\CP^{1}\setminus D$ via the open
immersion  $\left(u,v,w\right)\mapsto\left(\left[u:w+1\right],\left[u:1-w\right]\right)$. Then, the automorphism of  $\CP^{1}\times\CP^{1}\setminus D$
induced by  $f$ coincides with the unique automorphism of $V$ lifting the triangular  automorphism
$\left(u,w\right)\mapsto\left(u,w+u^{2}/2\right)$ of $\mathbb{C}^2$ via the birational morphism
 $V\rightarrow\C^{2}$, $\left(u,v,w,\right)\mapsto\left(u,w\right)$.  The latter triangular automorphism uniquely extends to a biregular automorphism  $\phi$ of the Hirzebruch surface $\F_{2}\rightarrow\CP^{1}$ via the open immersion of $\C^2$ in $\mathbb{F}_2$ as the complement of the union of a fiber $E_1$ and of the section $\aux_2$ with self-intersection $-2$.  In turn, the birational morphism  $V\rightarrow\C^{2}$ lifts to an open immersion of $V$ into the projective surface $\bar{V}$ obtained from 
 $\F_{2}$ by blowing-up the two points  $q_{\pm}=\left(0,\pm1\right)\subset\C^{2}\subset\F_{2}$ with exceptional divisors  $D_{\pm}$  respectively. The boundary $B_{\bar{V}}$  consists in the union of the strict transforms of  $\aux_{2}$, $E_{1}$ and of the fiber  $\aux_{1}$ of $\F_{2}\rightarrow\CP^{1}$ containing the points $q_{\pm}$ (see Fig. \ref{fig:quad2}).
\begin{figure}[ht]
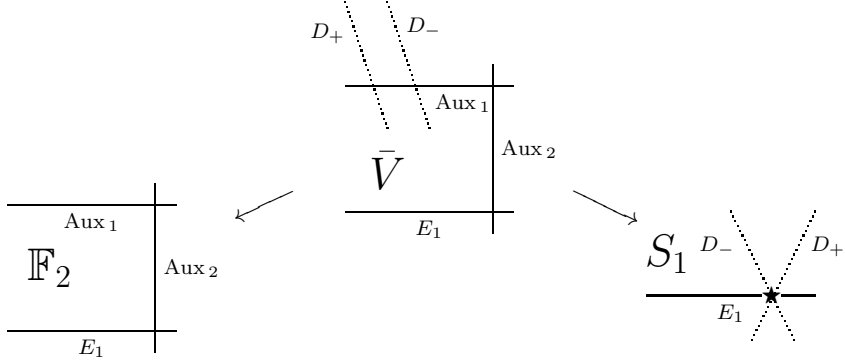

 $$\mygraph{
!{<0cm,0cm>;<1cm,0cm>:<0cm,1cm>::}
!{(0,0)  }*+{\dessinFdeux}="F2" 
!{(4.2,2)  }*+{\dessinVbar}="Vbar"
!{(8.4,0)  }*+{\dessinSun}="S1"
"Vbar"-@{->}"F2" "Vbar"-@{->}"S1" 
}$$
\caption{Sequence of blow-ups and contractions from $\F_2$ to $S_1$.} \label{fig:quad2}
\end{figure}

The automorphism  $\phi$ of $\F_{2}$ lifts to an automorphism of  $\bar{V}$ which restricts on $V$ to the automorphism induced by $f$. The latter descends to a biregular automorphism with the same property on the surface isomorphic to $S_1$ obtained from $\bar{V}$ by contracting the curves $\aux_{1}$ et $\aux_{2}$.   

By virtue of Gizatullin's result, every smooth admissible compactification $(S,B_S)$ of $V$ is isomorphic to $(\mathbb{P}^1\times \mathbb{P}^1,C)$ where $C$ is a smooth rational curve of self-intersection $2$. Applying our algorithm to an arbitrary automorphism $f$ of $V$ leads to a decomposition into triangular maps $j_i:(\mathbb{P}^1\times \mathbb{P}^1,C)\dashrightarrow (\mathbb{P}^1\times \mathbb{P}^1,C')$ with corresponding  decomposition into elementary links of the form $$ \CP^{1}\times\CP^{1}\leftrightarrow\hat{\mathbb{P}}^2(1)\leftrightarrow\hat{\mathbb{P}}^2(2)\leftrightarrow\cdots\leftrightarrow\hat{\mathbb{P}}^2(d)\leftrightarrow\cdots\leftrightarrow\hat{\mathbb{P}}^2(1)\leftrightarrow\CP^{1}\times\CP^{1}$$ where the projective surface $\hat{\mathbb{P}}^2(d)$ is  obtained  from the Hirzebruch surface  $\F_{d}$, with negative section  $C_0$,  by first blowing-up two distinct points in a fiber $F\setminus C_0$  of  $\F_{d}\rightarrow\cpo$ and then contracting  successively the strict transforms of $F$ and $C_0$. By construction,  $\hat{\mathbb{P}}^2(d)$  dominates the weighted projective plane $\cpd(d)$ via the the divisorial contraction of any of the strict transforms of the exceptional divisors of the first blow-up. 

   Since the group of biregular automorphisms of $\mathbb{P}^1\times \mathbb{P}^1$ acts transitively on the set of pairs consisting of a smooth rational curve with self-intersection $2$ and a point lying on it, we may compose $j_i$ by such an automorphism to obtain a birational transformation $a_i\circ j_i \circ a_{i-1}: \mathbb{P}^1\times \mathbb{P}^1\setminus D\dashrightarrow  \mathbb{P}^1\times \mathbb{P}^1\setminus D$ having $p=[0:1],[0:1]$ as a base point.
Its restriction to the complement of $D$ can then be interpreted as an automorphism of $V$ that preserves the fibration $\pi_u:V\rightarrow \C$ given as the restriction to $V$  of the unique rational pencil $\mathcal{P}$ on $\mathbb{P}^1\times \mathbb{P}^1$  with $p$ as a unique base point and containing  $D$ as a smooth member. One checks easily that such automorphisms come as the lifts via the morphism ${\rm p}_{w,u}:V\rightarrow \C^2$ of triangular automorphisms of $\C^2$ of the form $(w,u)\mapsto (w+uP(u),au)$, where $a\in \C^*$ and $P(u)$ is a polynomial. 
Let ${\rm Aff}$ be the subgroup of ${\rm Aut}(V)$ consisting of the restrictions of  biregular automorphisms of $\mathbb{P}^1\times \mathbb{P}^1$ preserving $D$. We conclude that ${\rm Aut}(V)$ is the amalgamated product of the group of automorphisms of the fibration  $\pi_u$ and of the group ${\rm Aff}$ over their intersection.

\subsection{ Automorphisms of the complement of a section with  self-intersection $4$ in $\F_0$ }\label{expl:+4}

Here we consider the case of an affine surface $V$ admitting
a smooth admissible compactification by a rational curve with self-intersection
$4$. According to Gizatullin's classification, the pairs corresponding
to such compactifications are either $\left(\mathbb{F}_{0},B\right)$ where
$B$ is an arbitrary smooth rational curve with self-intersection
$4$ or $\left(\mathbb{F}_{2},B\right)$ where $B$ is a section of
the $\mathbb{P}^{1}$-bundle structure $\mathbb{F}_{2}\rightarrow\mathbb{P}^{1}$
with self-intersection $4$. \\

First we review these compactifications $\left(S,B\right)$
with a particular emphasis on the existence of certain rational pencils
related with all possible triangular elementary maps that can occur
in the factorization given by proposition \ref{prop:main}. 

\begin{figure}[ht]
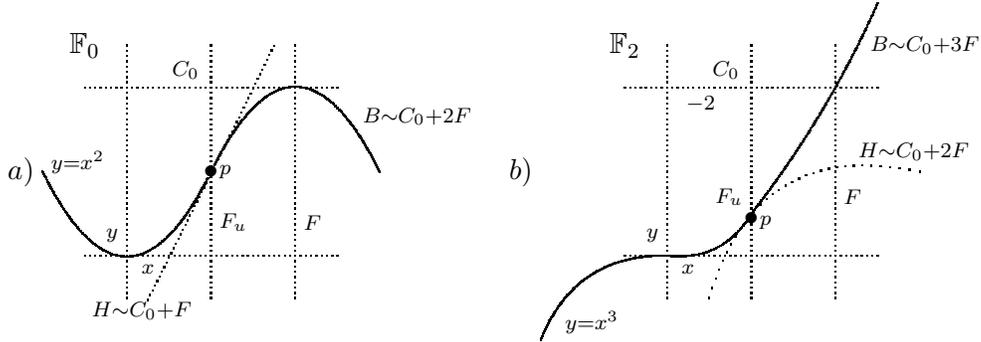

$$ a)\; \dessinDansFzero \quad b)\; \dessinDansFdeux$$
\caption{Construction of pencils in $\F_0$ and $\F_2$ (dotted curves are in $V$).}\label{fig:pencils}
\end{figure}

\subsubsection{The case $\left(\mathbb{F}_{0},B\right)$.}\label{cas:F0}
We choose fibers $C_{0}$
and $F$ of the two rulings on $\mathbb{F}_{0}$ as generators of the
divisor class group of $\mathbb{F}_{0}$. Then a smooth rational curve
$B\subset\mathbb{F}_{0}$ with self-intersection $4$ is linearly
equivalent to either $C_{0}+2F$ or $2C_{0}+F$. It follows that we
may choose bi-homogeneous coordinates $\left[x_{0}:x_{1}\right],\left[y_{0}:y_{1}\right]$
on $\mathbb{F}_{0}$ such that $C_{0}=\left\{ y_{1}=0\right\} $ and
$F=\left\{ x_{1}=0\right\} $ and $B\sim C_{0}+2F$ coincides
with the closure in $\mathbb{F}_{0}$ of the affine conic $\tilde{B}\subset\mathbb{C}^{2}=\mathbb{F}_{0}\setminus\left(C_{0}\cap F\right)={\rm Spec}\left(\mathbb{C}\left[x,y\right]\right)$
defined by the equation $y=x^{2}$. \\

a) Given a point $p= (u, u^2) \in \tilde{B}$
distinct from $p_0 = (0,0)$, the
hyperbola $\tilde{H}$ defined by the equation $\left(x-3u\right)y+3u^{2}\left(x-3u\right)+8u^{3}=0$
intersects $\tilde{B}$ only in $p$, with multiplicity $3$.
Its closure $H$ in $\mathbb{F}_{0}$ is a smooth rational curve linearly
equivalent to $C_{0}+F$. Letting $F_{u}$ be the fiber of the first
ruling over the point $\left[u:1\right]$, it follows that $B$
and $H+F_{u}$ generate a rational pencil $\mathcal{P}$
with $p$ as a unique proper base point (see Fig. \ref{fig:pencils}.a). A minimal resolution of $\mathcal{P}$
is given in Figure \ref{fig:models} (model I).
The restriction of $\mathcal{P}$ to $V\simeq\mathbb{F}_{0}\setminus B_{0}$
is a pencil of affine lines $V\rightarrow\mathbb{A}^{1}$ with a unique
degenerate fiber consisting of the disjoint union of two reduced affine
lines $H\cap V$ and $F_{u}\cap V$.\\

b) In contrast, $B$ is tangent to $C_{0}$ at the point $p_{\infty}=\left[1:0\right],\left[1:0\right]$
so that $B$ and $C_{0}+2F$ generate a rational pencil $\mathcal{P}$
with $p_{\infty}$ as a unique proper base point. A minimal resolution
of $\mathcal{P}$ is described in Figure \ref{fig:models} (model III). In this case, the restriction of $\mathcal{P}$ to $V$ is a pencil of affine
lines $V\rightarrow\mathbb{A}^{1}$ with a unique degenerate fiber
consisting of the disjoint union of a reduced affine line $C_{0}\cap V$
and a nonreduced one $F\cap V$, occurring with multiplicity $2$.
By symmetry, a similar description holds at the point $p_{0}=\left[0:1\right],\left[0:1\right]$. 

\subsubsection{The case $\left(\mathbb{F}_{2},B\right)$.}\label{cas:F2}
We choose a fiber $F$
of the ruling and the exceptional section $C_{0}$ with self-intersection
$-2$ as generators of the divisor class group of $\mathbb{F}_{2}$. The section $B$ is linearly equivalent to $C_{0}+3F$. In
particular $B$ intersects $C_{0}$ transversely in the single point
$C_{0}\cap F$. We identify $\mathbb{F}_{2}\setminus\left(C_{0}\cup F\right)$
to $\mathbb{C}^{2}$ with coordinates $x$ and $y$, the induced ruling
on $\mathbb{C}^{2}$ being given by the first projection. It follows
that $B$ coincides with the closure of an affine cubic $\tilde{B}\subset\mathbb{C}^{2}$
defined by an equation of the form $y=ax^{3}+bx^{2}+cx+d$. Since
every automorphism of $\mathbb{C}^{2}$ of the form $\left(x,y\right)\mapsto\left(\lambda x,\mu y+P\left(x\right)\right)$
where $P$ is a polynomial of degree $d\leq2$, extends to a biregular
automorphism of $\mathbb{F}_{2}$, we may actually assume from the
very beginning that $B$ is the closure in $\mathbb{F}_{2}$
of the curve $\tilde{B} \subset\mathbb{C}^{2}$ defined by the
equation $y=x^{3}$. \\

a) Given a point $p=\left(u,u^{3}\right)\in\tilde{B}$ distinct
from $\left(0,0\right)$, the affine cubic $\tilde{H}$ defined by
the equation $y=3ux^{2}-3u^{2}x+u^{3}$ intersects $\tilde{B}$
only in $p$, with multiplicity $3$. Its closure $H$ in $\mathbb{F}_{2}$
is a smooth rational curve linearly equivalent to $C_{0}+2F$. Let
$F_{u}\subset\mathbb{F}_{2}$ be the fiber of the ruling of $\mathbb{F}_{2}$
over $u$. It follows that $B$ and $H+F_{u}$ generate a rational
pencil $\mathcal{P}$ with
$p$ as a unique proper base point (see Fig. \ref{fig:pencils}.b).  A minimal resolution of $\mathcal{P}$
is given in Figure \ref{fig:models} (model I). The restriction of $\mathcal{P}$
to $V\simeq\mathbb{F}_{2}\setminus B$ is a pencil of affine lines
$V\rightarrow\mathbb{A}^{1}$ with a unique degenerate fiber consisting
of the disjoint union of two reduced affine lines $H\cap V$ and $F_{u}\cap V$.
A similar description holds at the point $p=\left(0,0\right)$ with
the line $\left\{ y=0\right\} $ as the curve $\tilde{H}$. \\ 

\begin{figure}[ht]
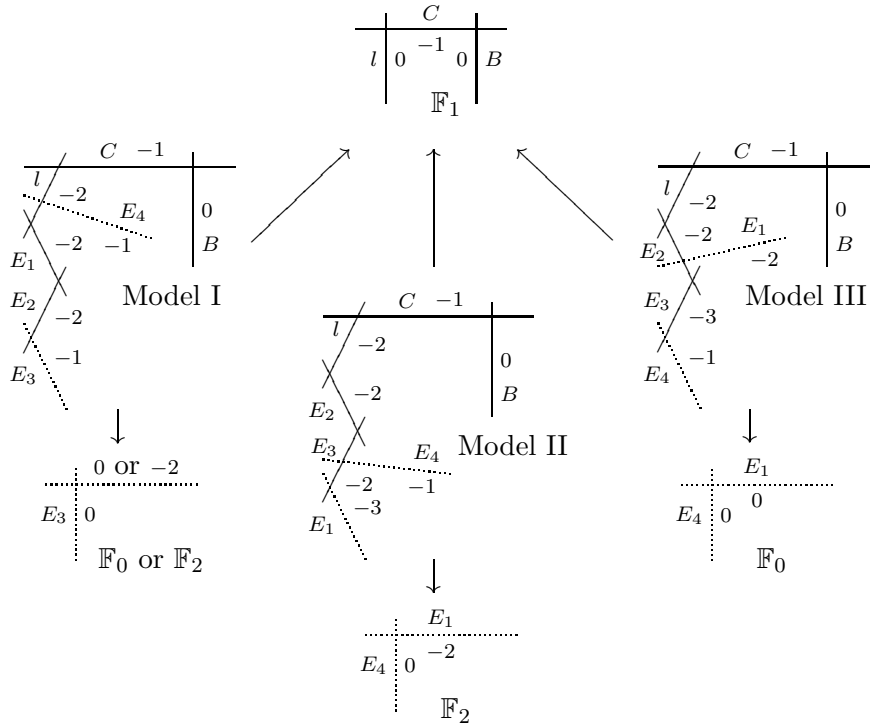

$$\mygraph{
!{<0cm,0cm>;<1cm,0cm>:<0cm,1cm>::}
!{(4.2,4)  }*+{\dessinFun}="F1"
!{(0,0)  }*++{\ModeleI}="MI" 
!{(4.2,-2)  }*++{\ModeleII}="MII"
!{(8.4,0)  }*++{\ModeleIII}="MIII"
"F1"-@{<-}"MI" "F1"-@{<-}"MII" "F1"-@{<-}"MIII" 
}$$
\caption{The three models of fibration (the index of the exceptional divisors $E_i$ corresponds to their order of construction coming from $\F_1$; the dotted curves intersect $V$).}  \label{fig:models}
\end{figure}

b) On the other hand, since $B$ intersects $C_{0}$ transversely
in a unique point $p_{\infty}$, it follows that $B$ and $C_{0}+3F$
generate a rational pencil $\mathcal{P}$
with $p_{\infty}$ as a unique proper base point. A minimal resolution
of $\mathcal{P}$ is described in Figure \ref{fig:models} (model II). As above, the restriction of $\mathcal{P}$ to $V$ is a pencil of affine
lines $V\rightarrow\mathbb{A}^{1}$ with a unique degenerate fiber
consisting of the disjoint union of a reduced affine line $C_{0}\cap V$
and a nonreduced one $F\cap V$, occurring with multiplicity $3$. 

\subsubsection{}\label{cas:tr}
By virtue of the proposition \ref{prop:main}, every automorphism of $V$ can be decomposed
into a sequence of triangular maps between smooth admissible
compactifications of $V$. Hereafter we describe each possible triangular
map that can occur in such a decomposition. \\

a) Triangular map $\left(\mathbb{F}_{2},B\right)\dashrightarrow\left(\mathbb{F}_{2},B'\right)$
with proper base point $B\cap C_{0}$ . 

We let $\hat{S}\rightarrow\mathbb{F}_{2}$ be the minimal resolution
of the base point of the corresponding rational pencil as in \ref{cas:F2}.b), and we let $\hat{\phi}:\hat{S}\dashrightarrow\hat{S}'$ be
a nontrivial fibered modification. The surface $\hat{S}'$ still dominates
$\mathbb{F}_{2}$ via the contraction of the strict transforms of
$C, l, E_2, E_3$. The  strict transform  $B'$ of the last
exceptional divisor produced by the fibered modification is a smooth
rational curve with self-intersection $4$. So $\hat{\phi}$
descends to a triangular map $\phi:\left(\mathbb{F}_{2},B\right)\dashrightarrow\left(\mathbb{F}_{2},B'\right)$
having $B\cap C_{0}$ has a unique proper base point. Note that $\phi^{-1}$
has again $B'\cap C_{0}$ as a unique proper base point. \\

b) Triangular map $\left(\mathbb{F}_{0},B\right)\dashrightarrow\left(\mathbb{F}_{0},B'\right)$
with proper base point at a point $p$ where $B$ is tangent to one
of the members of the two rulings. 

We let $\hat{S}\rightarrow\mathbb{F}_{0}$ be the minimal resolution
of the base point of the corresponding rational pencil as in \ref{cas:F0}.b). Given an arbitrary nontrivial fibered modification $\hat{\phi}:\hat{S}\dashrightarrow\hat{S}'$, the surface $\hat{S'}$ still dominates $\mathbb{F}_{0}$ via the contractions of  $C, l, E_2, E_3$, and the strict transform  $B'$ of the
last exceptional divisor produced by the fibered modification is a
smooth rational curve with self-intersection $4$. The birational
map $\hat{\phi}$ descends to a triangular map $\phi:\left(\mathbb{F}_{0},B\right)\dashrightarrow\left(\mathbb{F}_{0},B'\right)$
with $p$ as a unique proper base point. The proper base point of
$\phi^{-1}$ is again located at a point where $B'$ is tangent to
one of the two rulings. \\

\begin{figure}[ht]
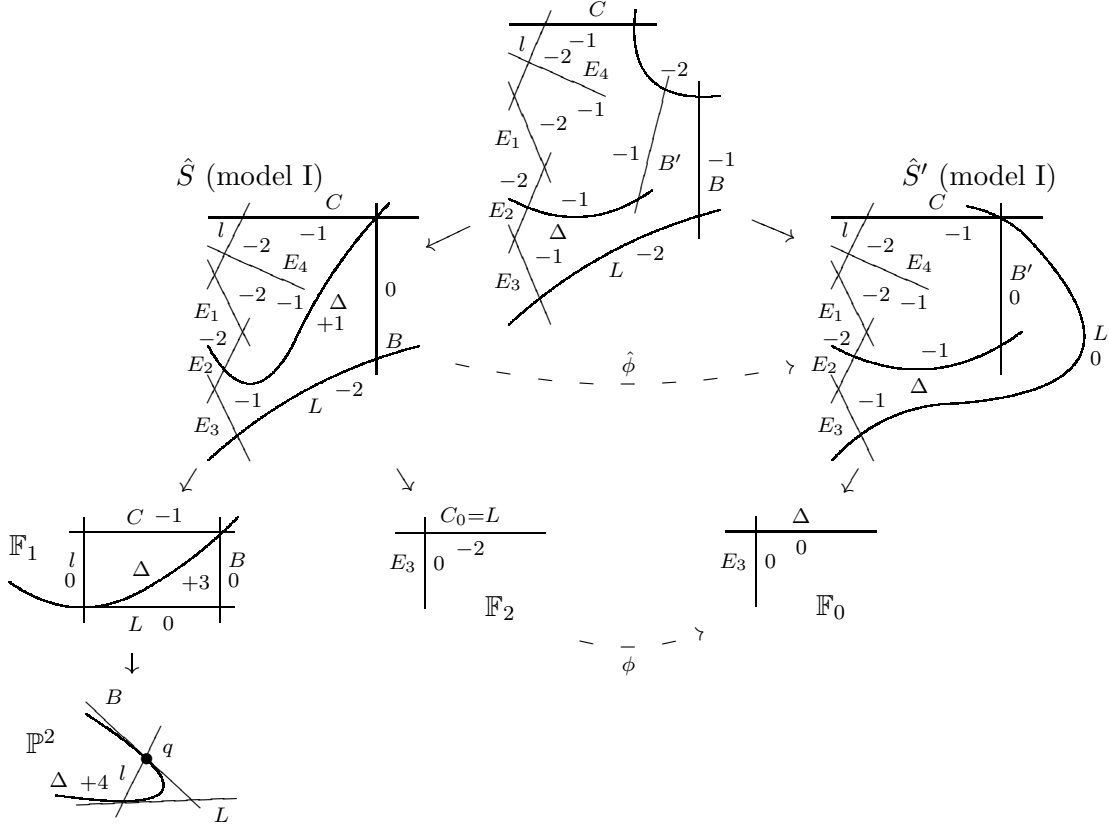

$$\mygraph{
!{<0cm,0cm>;<1.1cm,0cm>:<0cm,1cm>::}
!{(-2,+4)  }*+{\dessinTrDeb}="S"
!{(1.8,+6)  }*+{\dessinTrHaut}="Y"
!{(+6,+4)  }*+{\dessinTrFin}="Shat"
!{(0,.5)  }*+{\dessinTrFdeux}="F2"
!{(-4,.5)  }*+{\dessinTrFun}="F1"
!{(+4,.5)  }*+{\dessinTrFzero}="F0"
!{(-4,-2)  }*+{\dessinTrProj}="P2"
"Y"-@{->}"S" "Y"-@{->}"Shat"
"S"-@{->}"F1" "S"-@{->}"F2"
"Shat"-@{->}"F0" "F1"-@{->}"P2"
"S"-@{-->}@/_1cm/^{\hat{\phi}}"Shat"
"F2"-@{-->}@/_1cm/_{\phi}"F0"
}$$
\caption{Quadratic triangular map $\phi :\F_2 \dashrightarrow \F_0$.}
\label{fig:triangF2F0}
\end{figure}

c) Triangular maps starting from $(\mathbb{F}_{2},B)$
with proper base point contained in $B\setminus C_{0}$. 

We start with the special case of a quadratic triangular map (see Fig. \ref{fig:triangF2F0}).
We let $\hat{S}\rightarrow\mathbb{F}_{0}$ be the minimal resolution
of the base point of the corresponding rational pencil as in \ref{cas:F0}.a)
above. The surface $\hat{S}$ can also be obtained from $\mathbb{P}^{2}$
by a sequence of blow-ups with successive exceptional divisors $C$, $E_{1}$,
$E_{2}$, $E_{3}$ and $E_{4}$  in such a way that the irreducible
curves $l$ and $B$ correspond to the strict transforms of a
pair of lines in $\mathbb{P}^{2}$ intersecting at the center $q$
of the first blow-up. In this setting, the strict transform of $C_{0}\subset\mathbb{F}_{2}$
in $\hat{S}$ coincides with the strict transform of a certain line
$L$ in $\mathbb{P}^{2}$ intersecting $B$ in a point distinct from
$q$. Let $\hat{\phi}:\hat{S}\dashrightarrow\hat{S}'$ be any fibered
modification of degree $2$ and let $B'$ be the strict 
transform in $\hat{S}'$ of the second exceptional divisor produced.
Then one checks that there exists a unique smooth conic $\Delta$
in $\mathbb{P}^{2}$ tangent to $B$ in $q$ and to $L$ at the point
$L\cap l$ such that its strict transform in $\hat{S}'$
is a $\left(-1\right)$-curve which intersects transversely the strict transforms of $E_{2}$ and $B'$ in general points. 
By successively contracting $E_{4},\ldots,E_{1}$, we arrive at a
new projective surface $S'$ in which the strict transform of $B'$
is a smooth rational curve with self-intersection $4$ and such that
the strict transforms of $\Delta$ and $E_{3}$ are smooth rational
curves with self-intersection $0$, intersecting transversely in a
single point. Thus $S'\simeq\mathbb{F}_{0}$ and  $\hat{\phi}:\hat{S}\dashrightarrow\hat{S}'$
descends to triangular map $\phi:\left(\mathbb{F}_{2},B\right)\dashrightarrow\left(\mathbb{F}_{0},B'\right)$.
 Moreover, the proper base point of $\phi^{-1}$ is located at a point
where $B'$ intersects the two rulings transversely. \\

Now we consider triangular maps of degree $3$, that are more representative of the general situation. We will see that in the general case we still obtain maps $\left(\mathbb{F}_{2},B\right)\dashrightarrow\left(\mathbb{F}_{0},B'\right)$, but it will also exists some special triangular maps which end in $\F_2$.  

We fix homogeneous coordinates
$\left[x:y:z\right]$ on $\mathbb{P}^{2}$ in such a way that the
lines $B$, $l$ and $L$ introduced above coincide
with the strict transforms of the lines $\left\{ x=0\right\} $, $\left\{ y=0\right\} $ and $\left\{ z=0\right\} $. Instead of the conic $\Delta$ we consider now the
family of cuspidal cubics $C_{a,b}$ in $\mathbb{P}^{2}$ defined
by equations of the form $y^{3}+axy^{2}+bx^{2}z=0$, where $b\neq0$.
If $a\neq0$ then the strict transform of $C_{a,b}$ in $\hat{S}$
has self-intersection $3$, and intersects $E_{2}$ transversely in a single general point. In contrast,
the strict transform of $C_{0,b}$ has
self-intersection $2$, and intersects $E_{3}$ transversely in a single general point. For any given fibered modification $\hat{\phi}:\hat{S}\dashrightarrow\hat{S}'$
of degree $3$, there exists a unique curve $C_{a_{0},b_{0}}$
among the $C_{a,b}$'s with the property that its strict transform
in $\hat{S}'$ intersects the last exceptional divisor $B'$ produced.
Furthermore, its strict transform is a $-1$-curve if $a_{0}\neq0$
or a $-2$-curve otherwise. In the first (general) case, $\hat{S}'$ dominates
$\mathbb{F}_{0}$ via the contraction of $E_{4},\ldots,E_{1}$, the
two rulings being generated by the strict transforms of $C_{a_{0},b_{0}}$
and $E_{3}$, whereas in the second (special) case $\hat{S}'$ dominates $\mathbb{F}_{2}$
with the strict transform of $C_{0,b_{0}}$ as the exceptional section.
In both cases, the strict transform of $B'$ is a smooth rational
curve with self-intersection $4$, and $\hat{\phi}$ descends to a
triangular map $\phi:\left(\mathbb{F}_{2},B\right)\dashrightarrow\left(\mathbb{F}_{i},B'\right)$.\\ 

More generally, for every fibered modification $\hat{\phi}:\hat{S}\dashrightarrow\hat{S}'$
of degree $d\geq3$, one can check that there exists a cuspidal curve
of degree $d$ in $\mathbb{P}^{2}$ whose strict transform in $\hat{S}'$
intersects $B'$ and has self-intersection $-1$ if it intersects
$E_{2}$ or $-2$ if it intersects $E_{3}$. As in the case of degree 3, $\hat{\phi}$ descends to a triangular map $\phi:\left(\mathbb{F}_{2},B\right)\dashrightarrow\left(S',B'\right)$
where $S'=\mathbb{F}_{0}$ in the first case and $S'=\mathbb{F}_{2}$
in the second one. It also follows from the construction that the
proper base point $q$ of $\phi^{-1}$ is contained in $B'\setminus C_{0}$
if $S'\simeq\mathbb{F}_{2}$. On the other hand, if $S'=\mathbb{F}_{0}$ then
$q$ is a point of $B'$ such that $B'$ is not tangent to the members
of the two rulings of $\mathbb{F}_{0}$ through $q$. \\

d) Triangular maps $\left(\mathbb{F}_{0},B\right)\dashrightarrow\left(\mathbb{F}_{j},B'\right)$,
$j=0,2$ with proper base point at a point where $B$ is not tangent
to the rulings. 

The situation is essentially analogous to the one in c). Namely,
a general triangular map is of the form $\phi:\left(\mathbb{F}_{0},B\right)\dashrightarrow\left(\mathbb{F}_{0},B'\right)$
where the proper base point of $\phi^{-1}$ is of the same type as
the one of $\phi$, and there exists some special triangular maps
$\phi:\left(\mathbb{F}_{0},B\right)\dashrightarrow\left(\mathbb{F}_{2},B'\right)$
which are the inverses of the ones described above. \\

It follows from the definition that the cases a)-d) above exhaust
all possibilities of triangular maps between admissible compactifications
of $V$. One can easily construct some automorphisms of $V$ such that all these types of triangular maps occur in the factorization. Indeed, starting from the the special admissible compactification
$\left(\mathbb{F}_{2},B\right)$ described in \ref{cas:F2}.b), chose  $\phi_{k}:\left(\mathbb{F}_{i_{k}},B_{i_{k}}\right)\dashrightarrow\left(\mathbb{F}_{i_{k+1}},B_{i_{k+1}}\right)$,
$k=0,\ldots,n$ a sequence of triangular maps such that $\left(\mathbb{F}_{i_{0}},B_{i_{0}}\right)=\left(\mathbb{F}_{2},B\right)$, $\left(\mathbb{F}_{i_{n}},B_{i_{n}}\right)=\left(\mathbb{F}_{2},B'\right)$
and such that for every $k=0,\ldots,n-1$, $\phi_{k}$ and  $\phi_{k+1}$ are in general position (that is, the proper base point of
$\phi_{k+1}$ is distinct from the one of $\phi_{k}^{-1}$). Take $f =\phi_{n}\circ\cdots\circ\phi_{0}$. Since
the group of biregular automorphisms of $\F_2$ acts transitively on the set
of sections of the ruling $\mathbb{F}_{2}\rightarrow\mathbb{P}^{1}$,
there exists such an automorphism $\alpha$  inducing
an isomorphism of pairs $\left(\mathbb{F}_{2},B'\right)\stackrel{\sim}{\rightarrow}\left(\mathbb{F}_{2},B\right)$.
Then if we replace $\phi_n$ by $\alpha\circ\phi_n$ (which is still a triangular map), $f:\left(\mathbb{F}_{2},B\right)\dashrightarrow\left(\mathbb{F}_{2},B\right)$
induces an automorphism of $V=\mathbb{F}_{2}\setminus B$
with the property that the triangular maps occurring in its decomposition
are precisely the $\phi_{k}$'s. \\

\subsubsection{}
As a consequence of the description above we get the following

\begin{cor}
 An affine surface $V$ admitting a smooth admissible compactification by a rational curve $B$ with self-intersection $4$ admits precisely three distinct classes of pencils of affine lines $V\rightarrow \mathbb{A}^1$, up to conjugacy by automorphisms.
\end{cor}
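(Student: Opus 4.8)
The plan is to combine three ingredients already available in \S\ref{expl:+4}: Gizatullin's classification of the smooth admissible compactifications of $V$, the explicit list of pencils and of triangular maps produced in \ref{cas:F0}--\ref{cas:tr}, and the correspondence ``triangular map $\leftrightarrow$ pencil'' of Lemma \ref{lem:relation}. First I would reduce the statement to a count of rational pencils on a fixed smooth model. Let $\rho:V\to\mathbb{A}^1$ be a pencil of affine lines and let $S$ be one of the two smooth admissible compactifications $(\mathbb{F}_0,B)$, $(\mathbb{F}_2,B)$. The fibre of $\rho$ ``at infinity'', once closed up in $S$, is a divisor supported on the irreducible curve $B$, so $\rho$ extends to a rational pencil $\mathcal{P}$ on $S$ having $B$ as a member. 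A computation of topological Euler characteristics shows that $\rho$ has exactly one degenerate fibre, and an analysis of the boundary of the minimal resolution of $\mathcal{P}$, in the spirit of \S\ref{section:cadre} and using that the general fibre is $\mathbb{A}^1$, shows that $\mathcal{P}$ has a single proper base point $p\in B$; hence $\mathcal{P}$ is one of the pencils explicitly described in \ref{cas:F0} or \ref{cas:F2}. Since $B^2>0$, the resolution of $\mathcal{P}$ has the shape of \S\ref{sec:smooth}, so a fibered modification along $\mathcal{P}$ yields a triangular map whose associated pencil is $\mathcal{P}$; conversely, by Lemma \ref{lem:relation} every triangular map carries such a pencil. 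Thus the pencils of affine lines on $V$, up to conjugacy by $\mathrm{Aut}(V)$, are exactly the pencils occurring in the triangular maps enumerated in \ref{cas:tr}.a)--d).

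It then remains to read off the count. The unique degenerate fibre of $\rho$ is a disjoint union of two affine lines, which are either both reduced (cases \ref{cas:F2}.a and \ref{cas:F0}.a), or one reduced and one of multiplicity $2$ (case \ref{cas:F0}.b), or one reduced and one of multiplicity $3$ (case \ref{cas:F2}.b). The multiplicity of the non-reduced component is an invariant of the fibration, so these three possibilities are pairwise non-conjugate, giving at least three classes. Conversely, inside each possibility the pencils form a single $\mathrm{Aut}(V)$-orbit. For the multiplicity-$3$ case, such a pencil extends to $(\mathbb{F}_2,B)$ with base point the single point $B\cap C_0$, and the transitivity of $\mathrm{Aut}(\mathbb{F}_2)$ on the sections of self-intersection $4$ provides the conjugation between any two of them. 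For the multiplicity-$2$ case, it extends to $(\mathbb{F}_0,B)$ with base point one of the two points where $B$ is tangent to a ruling, and these two points are exchanged by an automorphism of the pair $(\mathbb{F}_0,B)$ (for $B=\{y=x^2\}$, by $(x,y)\mapsto(1/x,1/y)$). For the reduced case, the stabiliser of $B$ in $\mathrm{Aut}(\mathbb{F}_0)$, resp. in $\mathrm{Aut}(\mathbb{F}_2)$, already acts transitively on the general points of $B$, resp. on $B\setminus C_0$, and a quadratic triangular map $(\mathbb{F}_2,B)\dashrightarrow(\mathbb{F}_0,B')$ as in \ref{cas:tr}.c), which by Lemma \ref{lem:relation}.(1) sends the pencil on $\mathbb{F}_2$ to the one on $\mathbb{F}_0$, connects the $\mathbb{F}_2$--realisation with the $\mathbb{F}_0$--realisation once $\mathbb{F}_0\setminus B'$ is identified with $V$ through Gizatullin's isomorphism. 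Hence there are exactly three classes.

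The step I expect to be delicate is the claim that an arbitrary pencil of affine lines on $V$ has a unique proper base point on $B$, equivalently that it is the pencil of a triangular map: this is where one truly uses the geometry of the boundary, and one has to exclude pencils whose ``member at infinity'' is a proper multiple of $B$ as well as pencils whose resolution branches at several points of $B$. The transitivity assertions for the automorphism groups of the pairs $(\mathbb{F}_i,B)$ on the strata of $B$ are elementary but should also be checked carefully.
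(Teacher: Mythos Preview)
Your proposal is correct and follows essentially the same route as the paper: reduce an arbitrary pencil of affine lines to a rational pencil on a smooth model with a unique base point on $B$, identify it with one of the three models I, II, III of Figure~\ref{fig:models}, separate the three classes by the multiplicities in the degenerate fibre, and for the reduced case (model I) combine the transitivity of the stabiliser of $B$ on the generic stratum of $B$ with a triangular map linking the $\mathbb{F}_0$ and $\mathbb{F}_2$ realisations. The only cosmetic difference is the direction of that linking map: you invoke the quadratic triangular map $(\mathbb{F}_2,B)\dashrightarrow(\mathbb{F}_0,B')$ of \ref{cas:tr}.c), whereas the paper uses its inverse, a special triangular map $(\mathbb{F}_0,B_0)\dashrightarrow(\mathbb{F}_2,B)$ from \ref{cas:tr}.d) composed with an automorphism of $\mathbb{F}_2$ sending $B$ to $B_2$; your slightly more explicit treatment of the model~II and model~III cases (exchanging the two tangency points on $\mathbb{F}_0$, uniqueness of $B\cap C_0$ on $\mathbb{F}_2$) fills in details the paper leaves implicit.
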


\begin{proof}
The closure in a smooth admissible completion $\left(S,B\right)$
of $V$ of such a pencil $\pi:S\rightarrow\mathbb{A}^{1}$ is a rational
pencil $\mathcal{P}$ with a unique proper base point necessarily
supported on $B$. One checks that the proper transform of $B$ in
a minimal resolution $\sigma:\hat{S}\rightarrow S$ of $\mathcal{P}$
is a fiber of the $\mathbb{P}^{1}$-fibration $\bar{\pi}:\hat{S}\rightarrow\mathbb{P}^{1}$
defined by the total transform of $\mathcal{P}$ on $\hat{S}$. Since
$B^{2}=4$ in $S$, it follows that $\sigma^{-1}\left(B\right)\setminus\sigma_{*}^{-1}B$
consists of a chain of four smooth rational curves with self-intersections
$-2$, $-2$, $-2$ and $-1$. It is easilly checked that the rational
pencils of type I, II or III described above (see Fig. \ref{fig:models}) exhaust all possibilities
for the structure of the $\mathbb{P}^{1}$-fibration $\bar{\pi}:\hat{S}\rightarrow\mathbb{P}^{1}$. 
This leads to three distinct classes of such pencils that can already be roughly distinguished from each other by means of the multiplicities of the irreducible components of their degenerate fibers. In view of the description above, it remains to check that up to conjugacy by automorphisms of $V$, there exists only one such pencil with reduced fibers.  By construction, these pencils arise as the restrictions of rational pencils on $(\mathbb{F}_2,B_2)$ and $(\mathbb{F}_0,B_0)$ with a proper base point on $B_2\setminus C_0$ and $B_0\setminus (p_0 \cup p_\infty)$ (see \ref{cas:F0} and \ref{cas:F2}). The fact that the group of biregular automorphisms of $\mathbb{F}_2$ (resp. $\mathbb{F}_0$) preserving the section $B_2$  (resp. $B_0$) acts transitively on $B_2\setminus C_0$ (resp. $B_0\setminus (p_0 \cup p_\infty)$) implies that the pencils obtained from each projective model are conjugated. Now consider $\phi:(\mathbb{F}_0,B_0)\dashrightarrow (\mathbb{F}_2,B)$ a special triangular map (see \ref{cas:tr}.d), and $\alpha$ an automorphism of $\F_2$ that sends $B$ on $B_2$.  The composition $\alpha\circ \phi$ can be interpreted as an automorphism of $V$ sending a given rational pencil with proper point on $B_0\setminus (p_0 \cup p_\infty)$ onto a rational pencil with proper base point on $B_2\setminus C_0$. This shows that all such pencils are conjugated. 
\end{proof}

\subsection{Chain reversions} \label{par:chain}
In this last paragraph we give an example of a factorization where some surfaces with two singularities on the boundary appear. This is related to the existence of chain reversions.

It is known that if a smooth quasi-projective surface $V$  admits a smooth compactification by a chain of rational curves with self-intersections $(-e_1,\cdots,-e_k,-1,0)$, where $e_i\geq -2$ for every $i=1,\ldots ,k$, then it also admits one by a chain of the same length but with reversed self-intersections  $(-e_k,\cdots,-e_1,-1,0)$ (see \textit{e.g.} \cite{G}). Furthermore two such compactifications are always related by a sequence of elementary transformations with centers on the boundary (see \textit{e.g.} \cite{DubML} for explicit log resolutions of these  maps). Starting from such chains one can always produce an admissible compactification of $V$ by first contracting as many successive $-1$-curves as possible to smooth points and then contracting the remaining curves with negative self-intersection to a singular point supported on the strict transform of the initial $0$-curve, which becomes the boundary. 
      
  Here we consider an example which illustrates how these reversions of chains enter the game when one considers a same automorphism of a quasi-projective surface $V$ as a birational transformation between various admissible compactifications.  We let $V$ be the smooth affine surface in $\mathbb{C}^{4}$ defined
by the equations 

\[
\begin{cases}
xz= & y\left(y^{3}-1\right)\\
yu= & z\left(z-1\right)\\
xu= & \left(y^{3}-1\right)\left(z-1\right)\end{cases}\]

One checks that the birational morphism $\pi_{0}:V\rightarrow\mathbb{P}^{2}$,
$\left(x,y,z,u\right)\mapsto\left[x:y:1\right]$ lifts to an open
immersion of $V$ into the smooth projective surface $\overline{V}_{0}$
obtained from $\mathbb{P}^{2}$ with homogeneous coordinates $\left[t_{0}:t_{1}:t_{2}\right]$
by first blowing-up four distinct points on the affine line $L_{0,0}\setminus\left\{ \left[0:1:0\right]\right\} =\left\{ t_{0}=0\right\} \setminus\left\{ \left[0:1:0\right]\right\} $
with exceptional divisors $D_{0,0},D_{0,1},D_{0,2},D_{0,3}$,
and then blowing-up  a point on $D_{0,0}\setminus L_{0,0}$ with exceptional
divisor $D_{0,4}$. The boundary $\overline{V}_{0}\setminus V$ (pictured with plain lines on Fig. \ref{fig:inv0}) consists
of the union of the strict transforms of $L_{0,0}$, $D_{0,0}$ and
the line at infinity $L_{0,\infty}=\left\{ t_{2}=0\right\} $ on $\mathbb{P}^{2}$.
By contracting the strict transforms of $L_{0,0}$ and $D_{0,0}$,
we obtain an admissible compactification $S_{0}$ of $V$, with a unique
singularity of type $A_{5,2}$ supported on its boundary $B_{S_{0}}=L_{0,\infty}$. 
\begin{figure}[ht]
 $$\mygraph{
!{<0cm,0cm>;<1cm,0cm>:<0cm,1cm>::}
!{(0,0)  }*+{\dessinprojzero}="P2" 
!{(4.2,2)  }*+{\dessininversionSzero}="V"
!{(8.4,0)  }*+{\dessinSzero}="S0"
"V"-@{->}"P2" "V"-@{->}"S0" 
}$$
\caption{Sequence of blow-ups and contractions from $\cpd$ to $S_0$.} \label{fig:inv0}
\end{figure}

A second admissible compactification $S_{2}$ of $V$ can be obtained
in a similar way starting from the birational morphism $\pi_{2}:V\rightarrow\mathbb{P}^{2}$,
$\left(x,y,z,u\right)\mapsto\left[u:z:1\right]$. Indeed, one checks
that $\pi_{2}$ lifts to an open immersion of $V$ into the smooth
projective surface $\overline{V}_{2}$ obtained from $\mathbb{P}^{2}$
with homogeneous coordinates $\left[w_{0}:w_{1}:w_{2}\right]$ by
first blowing-up two distinct points on the affine line $L_{2,0}\setminus\left\{ \left[0:1:0\right]\right\} =\left\{ w_{0}=0\right\} \setminus\left\{ \left[0:1:0\right]\right\} $
with exceptional divisors $D_{2,0},D_{2,4}$, and then blowing-up three
distinct points on $D_{2,0}\setminus L_{2,0}$ with exceptional divisors
$D_{2,1}$, $D_{2,2}$ and $D_{2,3}$. The boundary $\overline{V}_{2}\setminus V$
consists of the union of the strict transforms of $L_{2,0}$, $D_{2,0}$
and the line at infinity $L_{2,\infty}=\left\{ w_{2}=0\right\} $
on $\mathbb{P}^{2}$. By contracting the strict transforms of $L_{2,0}$
and $D_{2,0}$, we obtain an admissible compactification $S_{2}$ of
$V$, with a unique singularity of type $A_{3,1}$ supported on its
boundary $B_{S_{2}}=L_{2,\infty}$. 
\begin{figure}[ht]
 $$\mygraph{
!{<0cm,0cm>;<1cm,0cm>:<0cm,1cm>::}
!{(0,0)  }*+{\dessinprojdeux}="P2" 
!{(4.2,2)  }*+{\dessininversionSdeux}="V"
!{(8.4,0)  }*+{\dessinSdeux}="S2"
"V"-@{->}"P2" "V"-@{->}"S2" 
}$$
\caption{Sequence of blow-ups and contractions from $\cpd$ to $S_2$.} \label{fig:inv1}
\end{figure}

The identity morphism ${\rm id}:V\rightarrow V$ induces a birational
map $\sigma:S_{0}\dashrightarrow S_{2}$. The relations \[
\begin{cases}
z & =x^{-1}y\left(y^{3}-1\right)\\
u & =x^{-1}\left(y^{3}-1\right)\left(z-1\right)  =x^{-2}\left(y^{3}-1\right)\left(y\left(y^{3}-1\right)-x\right)\end{cases}\]
in the function field of $V$ imply that there exists a commutative
diagram 

\[\xymatrix{S_0 \ar@{-->}[d] \ar@{-->}[r]^{\sigma} & S_2 \ar@{-->}[d] \\ \mathbb{P}^2 \ar@{-->}[r]^g & \mathbb{P}^2 }\]
where the vertical arrows denote the natural birational morphisms
obtained from the construction of $S_{0}$ and $S_{2}$ and where
$g:\mathbb{P}^{2}\dashrightarrow\mathbb{P}^{2}$ is the birational
map defined by 
\begin{eqnarray*}
 g:\left[t_{0}:t_{1}:t_{2}\right] & \dashrightarrow & \left[w_{0}:w_{1}:w_{2}\right] \\
 &&= \left[\left(t_{1}^{3}-t_{2}^{3}\right) \left(t_{1}\left(t_{1}^{3}-t_{2}^{3}\right)-t_{0}t_{2}^{3}\right):t_{0}t_{1}t_{2}^{2}\left(t_{1}^{3}-t_{2}^{3}\right):t_{0}^{2}t_{2}^{5}\right]
\end{eqnarray*}
 The point $p=\left[1:0:0\right]\in L_{0,\infty}$ is a unique base
point at infinity of $g$ and a resolution of $g$ is obtained by
first blowing-up $p$ with exceptional divisor ${\rm Aux}$, then blowing-up
the point ${\rm Aux}\cap L_{0,\infty}$ with exceptional divisor $E_{1}$
and finally blowing-up ${\rm Aux}\cap E_{1}$ with exceptional divisor
$E_{2}$. This resolution lifts to a log resolution of $\sigma:S_{0}\dashrightarrow S_{2}$
by performing the same sequence of blow-ups over a nonsingular point
of $B_{S_{0}}=L_{0,\infty}$ and then taking a minimal resolution 
of the singularity $A_{5,2}$ of $S_{0}$ by a chain of two rational curves $C_1,C_2$  (see figure \ref{fig:inv2}). 
\begin{figure}[ht]
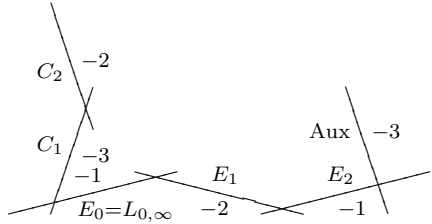

$$\dessinresolutiontranspose$$
\caption{Resolution of $\sigma: S_0 \dashrightarrow S_2$.} \label{fig:inv2}
\end{figure}

It follows that the factorization of $\sigma:S_{0}\dashrightarrow S_{2}$
consists of two links $S_{0}\leftrightarrow S_{1}\leftrightarrow S_{2}$.
Note that the intermediate surface $S_{1}$ has two singularities
of type $A_{3,2}$ and $A_{2,1}$ respectively. By successive blow-ups, one can obtain from  $S_0$ and $S_2$ two distinct compactifications of $V$ 
by chains of type  $(-2,-4,-1,0)$  and  $(-4,-2,-1,0)$  respectively. The birational map $\sigma:S_0\dashrightarrow S_2$ corresponds by construction 
to a reversion of these chains.

Now let $h:V\stackrel{\sim}{\rightarrow}V$ be the unique automorphism
of $V$ lifting the triangular automorphism $\left(u,z\right)\mapsto\left(u,z+u^{2}\right)$
of $\mathbb{C}^{2}$ via the birational morphism $V\rightarrow\mathbb{C}^{2}$,
$\left(x,y,z,u\right)\mapsto\left(u,z\right)$. The birational map
$h_{2}=h:S_{2}\dashrightarrow S_{2}$ admits a resolution by four blow-ups with the first one on the singularity. From this, we get a factorization into
two links $S_{2}\leftrightarrow S_{3}\leftrightarrow S_{2}$. One
checks that the intermediate surface $S_{3}$ is obtained from the
weighted projective plane $\cpd\left(2\right)$ by performing
a sequence of blow-ups and contractions similar to the one used to construct
$S_{2}$ from $\mathbb{P}^{2}$, and $h$ extends to a biregular automorphism
of $S_{3}$. 

One can also consider $h$ as a birational transformation $h_{0}=h:S_{0}\dashrightarrow S_{0}$.
Using again the fact that $h_{0}$ can be interpreted as a lifting
via the natural birational map $S_{0}\dashrightarrow\mathbb{P}^{2}$
of a suitable birational transformation of $\mathbb{P}^{2}$, one
checks that the boundary of a minimal log resolution of $h_{0}$ has
the structure pictured on figure \ref{fig:inv3}. 
\begin{figure}[ht]
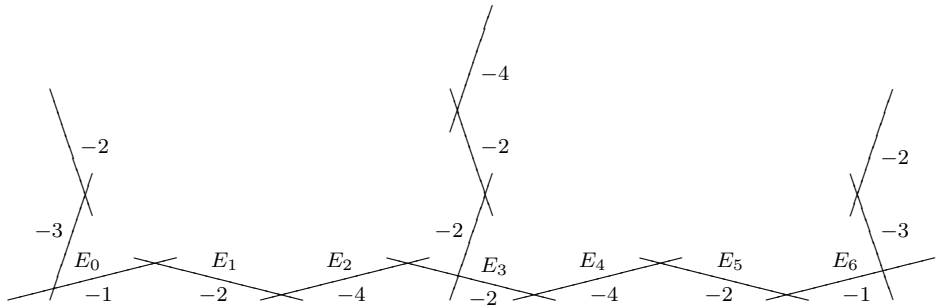

$$\dessinresolutiontransposebis$$
\caption{Resolution of $h_0:S_0 \dashrightarrow S_0$.} \label{fig:inv3}
\end{figure}

\noindent We deduce from this description that the factorization of $h_{0}$
consists of six elementary links 

$$ \underbrace{S_0  \underbrace{\leftrightarrow S_1 \leftrightarrow}_{\sigma}  S_2 \underbrace {\leftrightarrow S_3 \leftrightarrow}_{h_2} S_2 \underbrace{\leftrightarrow S_1 \leftrightarrow}_{\sigma^{-1}} S_0}_{h_0} $$ 
obtained by concatenating the factorizations of $\sigma:S_{0}\dashrightarrow S_{2}$,
$h_{2}:S_{2}\dashrightarrow S_{2}$ and $\sigma^{-1}:S_{2}\dashrightarrow S_{0}$. 

\bibliographystyle{amsplain}
\bibliography{biblio_surface}

\providecommand{\bysame}{\leavevmode\hbox to3em{\hrulefill}\thinspace}
\providecommand{\MR}{\relax\ifhmode\unskip\space\fi MR }
\providecommand{\MRhref}[2]{%
  \href{http://www.ams.org/mathscinet-getitem?mr=#1}{#2}
}
\providecommand{\href}[2]{#2}
\begin{thebibliography}{10}

\bibitem{BHPV}
Wolf~P. Barth, Klaus Hulek, Chris A.~M. Peters, and Antonius Van~de Ven,
  \emph{Compact complex surfaces}, second ed., vol.~4, Springer-Verlag, Berlin,
  2004.

\bibitem{BS8}
Eric Bedford and John Smillie, \emph{Polynomial diffeomorphisms of {$\C\sp 2$}.
  {VIII}. {Q}uasi-expansion}, Amer. J. Math. \textbf{124} (2002), no.~2,
  221--271.

\bibitem{BCHM}
Caucher Birkar, Paolo Cascini, Christopher~D. Hacon, and James McKernan,
  \emph{Existence of minimal models for varieties of log general type},
  arXiv:math/0610203v2 (2006).

\bibitem{BD}
J\'er\'emy Blanc and Adrien Dubouloz, Preprint, in preparation (2009).

\bibitem{BM}
Andrea Bruno and Kenji Matsuki, \emph{Log {S}arkisov program}, Internat. J.
  Math. \textbf{8} (1997), no.~4, 451--494.

\bibitem{Cor}
Alessio Corti, \emph{Factoring birational maps of threefolds after {S}arkisov},
  J. Algebraic Geom. \textbf{4} (1995), no.~2, 223--254.

\bibitem{Cor2}
\bysame, \emph{Singularities of linear systems and {$3$}-fold birational
  geometry}, Explicit birational geometry of 3-folds, London Math. Soc. Lecture
  Note Ser., vol. 281, Cambridge Univ. Press, Cambridge, 2000, pp.~259--312.

\bibitem{Cflips}
Alessio Corti (ed.), \emph{Flips for 3-folds and 4-folds}, Oxford University
  Press, 2007.

\bibitem{DubML}
Adrien Dubouloz, \emph{Completions of normal affine surfaces with a trivial
  {M}akar-{L}imanov invariant}, Michigan J. Math. \textbf{52} (2004), no.~2,
  289--308.

\bibitem{FKZ}
Hubert Flenner, Shulim Kaliman, and Mikhail Zaidenberg, \emph{On the
  {D}anilov-{G}izatullin isomorphism theorem}, arXiv:0808.0459 (2008).

\bibitem{FM}
Shmuel Friedland and John Milnor, \emph{Dynamical properties of plane
  polynomial automorphisms}, Ergodic Theory Dynam. Systems \textbf{9} (1989),
  no.~1, 67--99.

\bibitem{G}
Marat Gizatullin, \emph{Quasihomogeneous affine surfaces}, Math. USSR Izvestiya
  \textbf{5} (1971), 1057--1081.

\bibitem{GD1}
Marat Gizatullin and Vladimir Danilov, \emph{Automorphisms of affine surfaces.
  {I}}, Math. USSR Izvestiya \textbf{9} (1975), 493--534.

\bibitem{GD2}
\bysame, \emph{Automorphisms of affine surfaces. {II}}, Math. USSR Izvestiya
  \textbf{11} (1977), 51--98.

\bibitem{Har}
Robin Hartshorne, \emph{Algebraic geometry}, Springer-Verlag, New York, 1977,
  Graduate Texts in Mathematics, No. 52.

\bibitem{Kflips}
J{\'a}nos Koll{\'a}r (ed.), \emph{Flips and abundance for algebraic
  threefolds}, Soci\'et\'e Math\'ematique de France, Paris, 1992, Papers from
  the Second Summer Seminar on Algebraic Geometry held at the University of
  Utah, Salt Lake City, Utah, August 1991, Ast\'erisque No. 211 (1992).

\bibitem{KM}
J{\'a}nos Koll{\'a}r and Shigefumi Mori, \emph{Birational geometry of algebraic
  varieties}, Cambridge Tracts in Mathematics, vol. 134, Cambridge University
  Press, Cambridge, 1998, With the collaboration of C. H. Clemens and A. Corti,
  Translated from the 1998 Japanese original.

\bibitem{LamyJung}
St{\'e}phane Lamy, \emph{Une preuve g\'eom\'etrique du th\'eor\`eme de {J}ung},
  Enseign. Math. (2) \textbf{48} (2002), no.~3-4, 291--315.

\bibitem{Lamyquad}
\bysame, \emph{Sur la structure du groupe d'automorphismes de certaines
  surfaces affines}, Publ. Mat. \textbf{49} (2005), no.~1, 3--20.

\bibitem{Mat}
Kenji Matsuki, \emph{Introduction to the {M}ori program}, Universitext,
  Springer-Verlag, New York, 2002.

\end{thebibliography}

\end{document}